\theoremstyle{plain}
\newtheorem{thm}{Theorem}[section]
\newtheorem{cor}[thm]{Corollary}
\newtheorem{lem}[thm]{Lemma}
\newtheorem{prop}[thm]{Proposition}
\theoremstyle{definition}
\newtheorem{defi}[thm]{Definition}
\theoremstyle{remark}
\newtheorem{rem}[thm]{Remark}
\numberwithin{equation}{section}
\newtheorem{example}[thm]{Example}
\newcommand{\lgw}{\longrightarrow}
\newcommand{\lgm}{\longmapsto}
\newcommand{\lb}{\llbracket}
\newcommand{\rb}{\rrbracket}
\newcommand{\Ker}{\text{Ker}}
\newcommand{\wdh}{\widehat}
\newcommand{\NN}{\mathcal N}
\newcommand{\wdt}{\widetilde}
\renewcommand{\k}{\Bbbk}
\newcommand{\F}{\mathcal F}
\newcommand{\N}{\mathbb{N}}
\newcommand{\C}{\mathbb{C}}
\newcommand{\lag}{\langle}
\newcommand{\rag}{\rangle}
\renewcommand{\a}{\alpha}
\renewcommand{\b}{\beta}
\newcommand{\s}{\sigma}
\newcommand{\m}{\mathfrak m}
\renewcommand{\phi}{\varphi}
\renewcommand{\o}{\omega}
\newsavebox{\@brx}
\newcommand{\llangle}[1][]{\savebox{\@brx}{\(\m@th{#1\langle}\)}%
  \mathopen{\copy\@brx\mkern2mu\kern-0.9\wd\@brx\usebox{\@brx}}}
\newcommand{\rrangle}[1][]{\savebox{\@brx}{\(\m@th{#1\rangle}\)}%
  \mathclose{\copy\@brx\mkern2mu\kern-0.9\wd\@brx\usebox{\@brx}}}
\renewcommand{\lg}{\llangle}
\newcommand{\rg}{\rrangle}
\begin{document}
\title[Linear nested Artin Approximation]{Linear nested Artin Approximation Theorem for algebraic power series}
\author[F.~J. Castro-Jim\'enez,  D. Popescu, G. Rond]{Francisco-Jes\'us Castro-Jim\'enez,  Dorin Popescu, Guillaume Rond}

\address{Departamento de \'Algebra, Universidad de Sevilla, Spain}
\email{castro@us.es}

\address{Simion Stoilow Institute of Mathematics of the Romanian Academy, Research unit 5,
University of Bucharest, P.O.Box 1-764, Bucharest 014700, Romania}
\email{dorin.popescu@imar.ro}

\address{Aix-Marseille Universit\'e, CNRS, Centrale Marseille, I2M, UMR 7373, 13453 Marseille, France}
\email{guillaume.rond@univ-amu.fr}

\begin{abstract} We give an elementary proof of the nested Artin approximation Theorem for linear equations with algebraic power series coefficients.
Moreover, for any Noetherian local subring of the ring of formal power series, we clarify the relationship between this theorem and the problem of the commutation of two operations for ideals:  the operation of replacing an ideal by its completion and the operation of replacing an ideal by one of its elimination ideals. In particular we prove that a Grothendieck conjecture about morphisms of analytic/formal algebras and Artin's question about linear nested approximation problem are equivalent.
\end{abstract}

\keywords{Henselian rings,  Algebraic power series rings, Nested Artin approximation property}
\subjclass{Primary : 13B40, Secondary :   13J05, 14B12}

\thanks{F.-J.~Castro-Jim\'enez was partially supported by Ministerio de Econom\'\i a y Competitividad (MTM2013-40455-P and Feder). D. Popescu was partially  supported by the project  ID-PCE-2011-1023, granted by the Romanian National Authority for Scientific Research, CNCS - UEFISCDI.
G. Rond was partially supported by ANR projects STAAVF (ANR-2011 BS01 009) and SUSI (ANR-12-JS01-0002-01)}

\maketitle
\section{Introduction}
The aim of the paper is to investigate the nested Artin  approximation problem for linear equations. Namely the  nested Artin approximation problem is the following: if
$$F(x,y)=0$$ is a system of algebraic or analytic equations which are linear in $y$, with $x=(x_1,\ldots,x_n)$ and $y=(y_1,\ldots,y_m)$, and if $y(x)$ is a formal power series solution $$F(x,y(x))=0$$ with the property that
\begin{equation}\label{nested_prop} y_i(x) \text{ depends only on the variables } x_1,\ldots, x_{\s_i} \end{equation}
for some integers $\s_i$, is it possible to find algebraic or analytic solutions satisfying \eqref{nested_prop}?\\

D. Popescu \cite{Po} proved the nested Artin approximation theorem for any vector $F(x,y)$ of algebraic power series (not necessarily linear in $y$). In this paper we give an elementary proof of this theorem when $F(x,y)$ is linear in $y$.  Moreover we provide a characterization for a certain class of germs of functions $F(x,y)$, linear  in $y$, to satisfy the nested Artin approximation property. From an example of A. Gabrielov \cite{Ga} we know that the answer to the nested Artin  approximation problem is negative for the ring of convergent power series.

In order to explain the situation let us consider the following theorem (proven by M. Artin in characteristic zero and by M. Andr\'e in positive characteristic):\\

\begin{thm}\label{Ar}\cite{Ar68}\cite{An}
Let $\k$ be a complete valued field and let $F(x,y)$ be a vector of convergent power series in two sets of variables $x$ and $y$. Assume given a formal power series solution $\wdh{y}(x)$ vanishing at $0$, $$F(x,\wdh{y}(x))=0.$$
Then,  for any $c\in\N$,  there exists a  convergent power series  solution $\wdt{y}(x)$,
$$F(x,\wdt{y}(x))=0$$ which coincides with $\wdh{y}(x)$ up to degree $c$, $$\wdt{y}(x)\equiv \wdh{y}(x) \text{ modulo } (x)^c.$$
\end{thm}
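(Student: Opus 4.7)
The plan is to establish the theorem by induction on the number $n$ of variables $x=(x_1,\ldots,x_n)$, exploiting two central properties of the convergent power series ring $\k\{x\}$ over a complete valued field: it is Henselian, and both the Weierstrass Preparation and Division Theorems hold in it.

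First I would dispose of the \emph{smooth case}. Suppose the Jacobian matrix $\partial F/\partial y$ has a maximal minor $M(x,y)$ with $M(x,\wdh{y}(x))\not\equiv 0$; by enlarging the system and introducing auxiliary unknowns if necessary, one can even arrange that $M(0,\wdh{y}(0))\neq 0$. In that situation Hensel's lemma applied in the Henselian local ring $\k\{x\}$ directly yields a convergent solution $\wdt{y}(x)$ satisfying $F(x,\wdt{y}(x))=0$ and agreeing with $\wdh{y}(x)$ modulo $(x)^c$ for any prescribed $c$.

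The serious work is in the \emph{singular case}, where $\partial F/\partial y$ degenerates along $\wdh{y}(x)$. Here I would perform a generic linear change of coordinates and apply Weierstrass preparation to make a suitable component $F_i$, or a suitable minor of $\partial F/\partial y$, regular in $x_n$, writing it as a unit times a distinguished polynomial in $x_n$. Weierstrass division then lets me reformulate the equations and unknowns so that everything is polynomial in $x_n$ with coefficients in $\k\{x_1,\ldots,x_{n-1}\}$. Writing $\wdh{y}(x)=\sum_j a_j(x_1,\ldots,x_{n-1})\,x_n^j$ and taking the $a_j$ as new unknowns recasts the problem over $\k\{x_1,\ldots,x_{n-1}\}$, so the induction hypothesis furnishes convergent $\wdt{a}_j(x_1,\ldots,x_{n-1})$ from which one reassembles the desired $\wdt{y}(x)$.

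The main obstacle is precisely the reduction from the singular to the smooth case: the Jacobian $\partial F/\partial y$ may vanish to arbitrarily high order along $\wdh{y}(x)$, and one has to track how the auxiliary modifications — the introduction of extra unknowns absorbing the non-trivial minors, the successive Weierstrass preparations, and the generic changes of coordinates — interact with the requested congruence modulo $(x)^c$. Keeping the approximation orders under control through each of these steps is the combinatorial heart of Artin's (and Andr\'e's, in positive characteristic) original arguments, and everything else in the proof is comparatively formal.
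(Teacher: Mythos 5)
The paper does not prove Theorem~\ref{Ar}: it is quoted directly from \cite{Ar68} and \cite{An} as background, so there is no in-paper argument for you to be compared against. Judging your sketch on its own terms as a reconstruction of Artin's original proof, the overall shape is right (induction on the number of $x$-variables, Weierstrass preparation and division to eliminate $x_n$, use of a Jacobian minor), but the pivotal reduction claimed in your first paragraph is false and hides the real difficulty.

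Concretely: if a maximal minor $M$ of $\partial F/\partial y$ satisfies $M(x,\wdh y(x))\not\equiv 0$ but $M(0,\wdh y(0))=0$, you \emph{cannot} ``arrange $M(0,\wdh y(0))\neq 0$ by enlarging the system and introducing auxiliary unknowns.'' The order of vanishing at $0$ of the germ $\d:=M(x,\wdh y(x))$ is intrinsic to the pair $(F,\wdh y)$ and is exactly the obstruction the rest of the proof has to fight. What Artin actually does (Tougeron's trick) is the opposite direction: he shows that if one already has a \emph{convergent} $\bar y(x)$ with $F(x,\bar y)\equiv 0$ modulo $\d(x,\bar y)^2(x)^c$, then a Newton--Hensel iteration inside $\k\{x\}$ produces a nearby exact convergent solution. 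Producing such a $\bar y$ with that strong congruence is precisely what the Weierstrass division and the induction on $n$ are there for, and it cannot be bypassed by a formal enlargement. So the genuine dichotomy is ``$\d$ a unit'' (true Hensel case) versus ``$\d$ a nonzero nonunit germ,'' and the second branch always needs the full inductive machinery. Your closing paragraph rightly identifies the control of approximation orders through this reduction as the heart of the matter, but the sketch, by claiming the singular case can be made smooth, erases the very step it then calls the main obstacle.
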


Then M. Artin  (see \cite[p.7]{Ar71}) asked, whether or not, given a formal solution $\wdh y(x)=(\wdh y_1(x), \ldots,\wdh y_m(x))$ satisfying
$$\wdh y_j(x)\in \k\lb x_1, \ldots, x_{\s_j}\rb\ \ \ \forall j$$
for some integers $\s_j\in \{1, \ldots,n\}$, there exists a convergent solution $\wdt y(x)$ as in Theorem 1.1 
such that $$\wdt y_j(x)\in\k\{x_1, \ldots,x_{\s_j}\} \ \ \forall j.$$

Shortly after,  A. Gabrielov \cite{Ga} gave an example showing that the answer to Artin's question is negative in general. \\
On the other hand since Theorem \ref{Ar} remains valid if we replace convergent power series by algebraic power series (cf. \cite{Ar69}) the question of M. Artin is also relevant in this context and in this case this question has a positive answer.  Let us recall that a formal power series $f(x)\in\k\lb x_1, \ldots,x_n\rb$ is called \emph{algebraic} if it is algebraic over the ring of polynomials $\k[x_1,\ldots,x_n]$. The ring of algebraic power series is denoted by $\k\langle x_1,\ldots,x_n\rangle$.  Indeed after A. Gabrielov gave a negative answer to Artin's question, D. Popescu showed that it has a positive answer in the case the ring of convergent power series is replaced by the ring of algebraic power series:

\begin{thm}\label{Pop} \cite{Po} Let $\k$ be a field and  $F(x,y)$ be a vector of algebraic power series in two sets of variables $x$ and $y$. Assume given a formal power series solution $\wdh{y}(x)=(\wdh{y}_1(x), \ldots,\wdh y_m(x))$ vanishing at $0$,
$$F(x,\wdh{y}(x))=0.$$
Moreover let us assume that $\wdh{y}_j(x) \in\k\llbracket x_1,\ldots,x_{\s_j}\rrbracket  $, $1\leq j\leq m$, for some integers $\s_j$, $1\leq \s_j\leq n$.\\
Then for any $c\in\N$  there exists an algebraic power series solution $\wdt{y}(x)$ such that for all $j$, $\wdt{y}_j(x)\in \k\langle x_1,\ldots,x_{\s_j}\rangle$ and $\wdt{y}(x)-\wdh{y}(x)\in (x)^c$.
\end{thm}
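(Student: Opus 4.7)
The plan is to derive Theorem \ref{Pop} from Popescu's General N\'eron Desingularization Theorem, which asserts that every regular homomorphism of Noetherian rings is a filtered colimit of smooth finite-type homomorphisms. The essential inputs are that each inclusion $A_j := \k\lag x_1,\ldots,x_{\s_j}\rag \hookrightarrow \wdh A_j := \k\lb x_1,\ldots,x_{\s_j}\rb$ is a regular morphism and that these assemble into a compatible tower as $j$ grows; the shape of $F$ plays no role, so the full nonlinear case is treated uniformly.

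Reorder the variables so that $\s_1 \leq \cdots \leq \s_m$ and partition $\{1,\ldots,m\}$ into maximal blocks on which $\s_j$ is constant, of values $\tau_1 < \cdots < \tau_r$. Induct on $r$. The base case $r = 1$ is ordinary (non-nested) Artin approximation for algebraic power series: the formal solution gives an $\wdh A$-point of $E := \k\lag x,y\rag/(F)$; General N\'eron Desingularization factors it through a smooth $A$-algebra $D$, and the henselian property of $A := A_m$ lifts to an $A$-point of $D$ close to the formal one, yielding $\wdt y$.

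For the inductive step, set $\tau := \tau_1$, split the unknowns as $z = (y_1,\ldots,y_{j_1})$ (innermost block, with $\wdh z \in \wdh A_\tau^{j_1}$) and $w$ (the rest), and aim to produce first an algebraic approximation $\wdt z \in A_\tau^{j_1}$ for which $F(x,\wdt z,w) = 0$ still admits a suitably nested formal solution close to $\wdh w$; the inductive hypothesis with $r - 1$ blocks then finishes. To extract $\wdt z$, view $E$ as an $A_\tau$-algebra and consider the $A_\tau$-homomorphism $\phi: E \to \wdh A$ sending $y \mapsto \wdh y$. The $z$-coordinates of $\phi$ factor through the regular extension $A_\tau \hookrightarrow \wdh A_\tau$; applying General N\'eron Desingularization to this extension and combining with the henselian property of $A_\tau$ produces $\wdt z \in A_\tau^{j_1}$ congruent to $\wdh z$ modulo $(x_1,\ldots,x_\tau)^c$ for any prescribed $c$.

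The main obstacle is the persistence of the formal solution after substituting $\wdt z$ for $\wdh z$: to invoke the inductive hypothesis one needs an \emph{exact} formal solution $\wdh w'$ of $F(x,\wdt z, w) = 0$ with $\wdh w'_l \in \wdh A_{\s_{j_1+l}}$ and close to $\wdh w$. Choosing $c$ large forces $F(x,\wdt z,\wdh w) \in (x)^c$, and the desired lifting of $\wdh w$ to $\wdh w'$ must be produced level-by-level through the tower $\wdh A_{\tau_2} \subset \cdots \subset \wdh A_{\tau_r}$, either by Tougeron-style Newton iteration or by an auxiliary application of General N\'eron Desingularization internal to each $\wdh A_{\tau_l}$. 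Coordinating the choice of $c$ with the approximation ``loss'' at each inductive step, and verifying that the nested structure is preserved throughout this lifting, is the technical crux of the argument.
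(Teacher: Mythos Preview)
The paper does not actually prove Theorem~\ref{Pop}; it is quoted from \cite{Po}, and the paper only says that its proof ``relies on an idea of Kurke from 1972 and the Artin approximation property of rings of type $\k\lb x\rb\langle z\rangle$ based on the so-called General N\'eron Desingularization''. So your identification of GND as the engine is correct, and your base case $r=1$ is fine. The specific induction you propose, however, has a real gap that you yourself flag but do not close.

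The problem is the direction of your induction. You approximate the \emph{innermost} block $\wdh z$ first by some $\wdt z\in\k\lag x_1,\ldots,x_\tau\rag$, and then you need an exact nested formal solution $\wdh w'$ of $F(x,\wdt z,w)=0$. But nothing guarantees that such a solution exists: knowing $F(x,\wdt z,\wdh w)\in(x)^c$ for large $c$ does not, in the absence of any smoothness hypothesis on $F$, produce a nearby exact formal solution --- let alone one respecting the nesting in $w$. ``Tougeron--Newton'' requires a Jacobian condition you do not have, and invoking GND again at this point is essentially circular (you would be using strong nested approximation to prove nested approximation). The Kurke--Popescu route avoids this entirely by going in the opposite direction: one uses GND to establish the Artin approximation property of the \emph{mixed} rings $\k\lb x_1,\ldots,x_i\rb\lag x_{i+1},\ldots,x_n\rag$, and then approximates over such a ring so that the inner components, which already lie in the complete coefficient ring $\k\lb x_1,\ldots,x_i\rb$, are left untouched while the outer ones are made algebraic in the remaining variables; a further descent (of the kind the present paper carries out in Lemma~\ref{lem1} and Proposition~\ref{p} for the linear case) then replaces the formal coefficients by algebraic ones. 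In that scheme one never has to ``re-solve'' the system after a partial substitution, which is exactly the step your outline cannot justify.
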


Let us remark that if $F(x,y)$ is a vector of polynomials in $y$ with coefficients in $\k\langle x\rangle$ we may drop the condition that $\wdh{y}(x)$ vanishes at $0$ by replacing $F(x,y)$ (resp. $\wdh{y}(x)$) by $F(x,y+\wdh y(0))$ (resp. $\wdh{y}(x)-\wdh y(0)$).
This result has a large range of applications (see  \cite{FB}, \cite{Mir} or \cite{Sh} for some recent examples). Its proof relies on an idea of Kurke from 1972 and the Artin approximation property of rings of type $\k\lb x\rb\langle z\rangle$  based on the General N\'eron Desingularization Theorem which is quite involved (see \cite{Po} or \cite{Sw}). \\
The first goal of this paper is to provide a new and elementary proof of Theorem \ref{Pop}
for equations $F(x,y)=0$ which are linear in $y$ (see Theorem \ref{THM1}). This shows that Theorem \ref{Pop}
is really easier in the case $F(x,y)$ is linear in $y$. Let us mention that in the case where there is only one nest (i.e. when there is a given $k\leq n$ such that $\s_i=k$ or $n$ for every $i$) this has been already proven by E. Bierstone and P. Milman (see Theorem 12.6 \cite{BM}). In fact our proof is based on a reduction to this case.\\

In the second part of this paper we investigate the relationship between Artin's question and the following conjecture of A. Grothendieck (see \cite[p. 13-08]{Gro}):

\vspace{.1cm}

\emph{If $\phi:\C\{x\}/I\lgw \C\{y\}/J$ is an injective morphism of analytic algebras then the corresponding morphism $\wdh \phi:\C\lb x\rb/I\C\lb x\rb\lgw \C\lb y\rb/J\C\lb y\rb$ is again injective.}

In fact the counterexample of A. Gabrielov to Artin's question is built from a counterexample to the conjecture of A. Grothendieck he gave in \cite{Ga}. Even if it is obvious that the counterexample of Gabrielov to Grothendieck's conjecture provides a negative answer to the question of M. Artin, the relationship between these two problems is not clear in general.

The second goal of this paper is to clarify the relationship between Grothendieck's conjecture and Artin's question. We show in a general framework (i.e. not only for the rings of convergent power series or algebraic power series but for more general families of rings - cf. Definition \ref{adm}) that Grothendieck's conjecture  is equivalent to the question of M. Artin in the case where $F(x,y)$ is linear in $y$ (see Theorem \ref{Thm1}). Let us mention that it is well known that Grothendieck's conjecture is equivalent to  Artin's question  for some very particular  $F(x,y)$ which are linear in $y$ (see \cite{Be} and \cite{Ro}) but, to the best of our knowledge, it was not known that they are equivalent for all $F(x,y)$  linear in $y$.

We also prove (see Theorem \ref{Thm1}) that these two problems are equivalent to the problem of the commutation of two operations:  the operation of replacing an ideal by its completion and the operation of replacing an ideal by one of its elimination ideals (see \ref{sep}).

Finally we mention that the question of Grothendieck has been widely studied in the case of convergent power series rings and it has been shown that the answer is positive for some particular cases (see for instance \cite{A-vdP},  \cite{Ga2}, \cite{E-H}, \cite{Mi}, \cite{Iz} or \cite{To2}). One of  them, similar to our situation, is the case of a morphism $\phi: \k\{x\}/I\lgw \k\{y\}/J$ where the images of the $x_i$ are  algebraic power series and the ideals $I$ and $J$ are prime and generated by algebraic power series. For such morphisms it is shown that $\phi$ is injective if and only if $\wdh \phi$ is injective (it has been proven in several steps in \cite{To},  \cite{Be},  \cite{Mi} and \cite{Ro1}).

\section{Acknowledgements} This research was started in the frame of the Jean Morlet Chair {\sl Artin Approximation in Singularity Theory}, held at CIRM (Marseille, France) from January until June 2015. The Chair was held by Prof. Herwig Hauser and the Local Project Leader was the third author. We are  grateful for the hospitality and support of CIRM during the main stage of this work. We would like to thank Profs. H. Hauser and M.E. Alonso for their very useful comments and suggestions.  We also would like to thank E. Bierstone for having indicated to us the reference \cite{BM} where is proven a result similar to our Proposition \ref{p}.


\section{Linear Nested Artin Approximation for algebraic series}
We will prove the following linear version of Theorem \ref{Pop}:

\begin{thm}[Linear Nested Artin Approximation Theorem]\label{THM1}
Let $m,n,p$ be positive integers,
$T$ be a $p\times m$ matrix  with entries in $\k\lag x\rag:=\k\lag x_1, \ldots,x_n\rag$, 
$b=(b_1,\ldots,b_p) \in \k\lag x\rag^p$ and $\s :\{1, \ldots,m\}\lgw \{1, \ldots,n\}$ be a  map. Let $y=(y_1, \ldots,y_m)$ be a vector of new variables.
Then for any solution $\wdh y(x)$ in  $$ \k\lb  x_1, \ldots,x_{\s(1)}\rb \times \cdots\times\k\lb  x_1, \ldots,x_{\s(m)}\rb $$ of the following system of linear equations
\begin{equation}\label{eqq} Ty=b\end{equation}
and for any integer $c$ there exists a solution $y(x)$ in
$$\k\lag x_1, \ldots,x_{\s(1)}\rag\times \cdots\times\k\lag x_1, \ldots,x_{\s(m)}\rag$$
such that $y(x)-\wdh y(x)\in (x)^c\k\lb x\rb^m$.

\end{thm}

We begin by giving some intermediate results:

\begin{lem}\label{lem1}
Let $(A,\m)$ be a complete normal local domain, $x=(x_1,\ldots,x_n)$ and $y=(y_1,\ldots,y_m)$. Let $B=A\lb x\rb\lag y\rag$ be the algebraic closure of $A\lb x\rb[ y]$ in $A\lb x,y\rb$ and $f\in B$. Then there exist $g$ in the algebraic closure $A\lag y, z\rag$ of $A[y,z]$ in $A\lb y,z\rb$, with $z=(z_1,\ldots,z_s)$ for some $s\in\N$, and $\wdh z\in A\lb x\rb^s$ such that $f=g(y,\wdh z)$.
\end{lem}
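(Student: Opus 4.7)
The plan is to ``universalize'' the $x$-dependence of a polynomial relation satisfied by $f$: replace each of its $A\lb x \rb$-valued coefficients by a new variable to obtain a polynomial relation $Q(y, z, T) = 0$ over $A[y, z]$, then solve $Q = 0$ for $T$ in the Henselian ring $A\lag y, z \rag$ by Hensel's lemma, so that the specialization $z \mapsto \wdh z$ recovers $f$. Because $A$ is a complete local (hence excellent) domain, $A\lb x \rb \lag y \rag$ coincides with the Henselization of $A\lb x \rb[y]_{(\m_A, x, y)}$; accordingly I can pick $P(T) \in A\lb x \rb[y][T]$ with $P(f) = 0$ and $\partial_T P(x, y, f) \in A\lb x, y \rb^{*}$ (an \emph{\'etale defining equation} for $f$). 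Expanding $P(T) = \sum_{i, \alpha} c_{i, \alpha}(x) y^\alpha T^i$ with $c_{i, \alpha} \in A\lb x \rb$, put $a_{i, \alpha} := c_{i, \alpha}(0) \in A$ and $\wdh z := \bigl(c_{i, \alpha}(x) - a_{i, \alpha}\bigr)_{i, \alpha} \in (x) A\lb x \rb^{s}$; with fresh variables $z = (z_{i, \alpha})$, set
\[
Q(y, z, T) := \sum_{i, \alpha} (z_{i, \alpha} + a_{i, \alpha}) y^{\alpha} T^{i} \in A[y, z][T],
\]
so that $Q(y, \wdh z, T) = P(x, y, T)$. Since $\wdh z \in (x) A\lb x \rb^{s}$, the assignment $z \mapsto \wdh z$ is a local homomorphism $A[y, z]_{(\m_A, y, z)} \to A\lb x, y \rb$ and thus extends, by the Henselianity of the target, to a local ring map $\psi \colon A\lag y, z \rag \to A\lb x, y \rb$.

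Next, I apply Hensel's lemma in the Henselian local ring $A\lag y, z \rag$ with approximation $T_{0} := f(0, y) \in A\lb y \rb$. The identity $P(0, y, T_{0}) = 0$ shows that $T_{0}$ is algebraic over $A[y]$, so $T_{0} \in A\lag y \rag \subseteq A\lag y, z \rag$. Since $Q(y, 0, T) = P(0, y, T)$ one has $Q(y, z, T_{0}) \in (z) A\lag y, z \rag$, while $\partial_T Q(y, z, T_{0})$ has residue $\partial_T P(0, 0, f(0, 0)) \in \k^{*}$ (the unit property of $\partial_T P(x, y, f)$ in $A\lb x, y \rb$ descends to its constant term). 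Hence $\partial_T Q(y, z, T_{0}) \in A\lag y, z \rag^{*}$, and Hensel's lemma (in its strong form with nilpotent ideal $(z)$) yields a unique $g \in A\lag y, z \rag$ with $Q(y, z, g) = 0$ and $g \equiv T_{0} \pmod{(z)}$.

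Finally, applying $\psi$ to $Q(y, z, g) = 0$ gives $P(x, y, \psi(g)) = 0$; since $g \equiv T_{0} \pmod{(z)}$ and $\psi(T_{0}) = T_{0} = f(0, y)$, we get $\psi(g) \equiv f(0, y) \pmod{(\wdh z)} \subseteq (x) A\lb x, y \rb$, and combined with $f - f(0, y) \in (x) A\lb x, y \rb$ this forces $\psi(g) - f \in (x) A\lb x, y \rb$. A Taylor expansion writes $0 = P(\psi(g)) - P(f) = (\psi(g) - f) \cdot U$ with $U \equiv \partial_T P(x, y, f) \pmod{\psi(g) - f}$; as $\partial_T P(x, y, f)$ is a unit and $\psi(g) - f$ lies in the maximal ideal, $U$ is a unit in $A\lb x, y \rb$, forcing $\psi(g) = f$ in the domain $A\lb x, y \rb$. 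This is exactly the identity $g(y, \wdh z) = f$ we want. The main subtlety of this approach is the first step---producing an \'etale defining polynomial $P$ for $f$---which relies on the identification of the algebraic closure in the completion with the Henselization, valid here by excellence; granted that, the remainder is a routine Hensel-type computation, the one additional care being the constant-term shift of $\wdh z$ needed to make the substitution map $z \mapsto \wdh z$ local.
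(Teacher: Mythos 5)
Your overall strategy is exactly that of the paper: identify $B$ with the Henselization of $A\lb x\rb[y]_{(\m,x,y)}$, universalize the $A\lb x\rb$-coefficients of a defining polynomial by new variables $z$, apply the Implicit Function Theorem (Hensel) in the Henselian ring $A\lag y,z\rag$, and recover $f$ by specializing $z\mapsto\wdh z$ and invoking uniqueness. The coefficient shift $a_{i,\alpha}=c_{i,\alpha}(0)$ and the $T_0=f(0,y)$ bookkeeping are equivalent to the paper's reduction $f\mapsto f-f(0,y)$.

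However, there is a genuine gap at the very first step. You assert that one may pick $P(T)\in A\lb x\rb[y][T]$ with $P(f)=0$ \emph{and} $\partial_T P(x,y,f)$ a unit --- i.e.\ that $f$ itself satisfies an \'etale defining equation over $A\lb x\rb[y]$. This is false for a general element of a Henselization. For instance, with $C=\k[t]_{(t)}$ ($\operatorname{char}\k=0$) and $f=t\sqrt{1+t}\in C^h=\k\lag t\rag$, any $P\in C[T]$ with $P(f)=0$ is a $C[T]$-multiple of the primitive minimal polynomial $T^2-t^2-t^3$, so $P'(f)=2f\,q(f)$ has positive $t$-order and is never a unit. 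Thus in general the only element guaranteed to satisfy an \'etale equation is a suitable \emph{primitive element} $h$ of some \'etale neighborhood containing $f$. This is precisely what the paper supplies via Swan's Theorem~2.5: it obtains $h\in(x,y)B$ with a monic $F$ satisfying $F(h)=0$, $F'(h)\notin(\m,x,y)$, together with $f\in A\lb x\rb[y,h]_{(\m,x,y)\cap A\lb x\rb[y,h]}$, and then reduces the lemma for $f$ to the lemma for $h$ by adjoining the $A\lb x\rb$-coefficients of the expression of $f$ in $h$ as additional $\wdh z$-coordinates. Once you insert this intermediate element $h$ and the corresponding reduction step, the rest of your argument (the $Q$, the Hensel lemma with approximation $T_0$, the specialization $\psi$ and the uniqueness conclusion) goes through and coincides with the paper's.
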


\begin{proof}

By replacing $f$ by $f-f(0,y)$ we may assume that $f\in (x)B$. Note that $B$ is the Henselization of $C=A\lb x\rb[y]_{(\m,x,y)}$ by \cite[44.1]{Na} and so there exists some \'etale neighborhood of $C$ containing $f$. Using for example \cite[Theorem 2.5]{Sw} there exists a monic polynomial $F$ in $u$ over $A\lb x\rb[y]$ and $h\in (\m,x,y) A\lb x\rb \langle y\rangle$ such that $F(h)=0$, $(\partial F/\partial u)(h)\not \in (\m,x,y)$ and $f\in A\lb x\rb [y,h]_{(\m,x,y)\cap A\lb x\rb [y,h]}$, let us say $f=P(y,h)/Q(y,h)$ for some $P(y,u), Q(y,u)\in A\lb x\rb [y,u]$, $Q(y,h)\notin (\m,x,y)\cap A\lb x\rb [y,h]$.\\
Let us write
$$Q(y,h)=\sum_{\a,i}(q_{\a i}+\wdh w_{\a i})y^\a h^i$$
where $q_{\a i}\in A$ and $\wdh w_{\a i}\in (\m+(x))A\lb x\rb$  for every $\a$, $i$. We set
$$\tilde Q=\sum_{\a,i}(q_{\a i}+ w_{\a i})y^\a u^i$$
for new indeterminates $w_{\a i}$. Since $Q(y,h)\notin (\m,x,y)$, $\wdh w_{\a i} \in (\m+(x))A\lb x\rb$ and $h\in(\m,x,y) A\lb x\rb \langle y\rangle$, we have that $q_{00}$ is a unit of $A$. So $\tilde Q$ is invertible in $A\langle y,u,w_{\a,i}\rangle$. Moreover
$$\tilde Q^{-1}(y,h,\wdh w_{\a i})=Q(y,h)^{-1}$$ by uniqueness of the inverse. \\
Thus by adding the new $w_{\a i}$ and the coefficients of $P$ from $A\lb x\rb$ as new $\wdh w$, we see that our lemma works for $f$ as soon as it works for $h$.  So we can replace $f$ by $h$ and assume $f\in (\m,x,y) A\lb x\rb \langle y\rangle$, $F(f)=0$ and $F'(f):=(\partial F/\partial u)(f)\not \in (\m,x,y)$. Let us write $F=\sum_{\alpha,j}F_{\alpha j}y^{\alpha}u^j$ for some $F_{\alpha j}\in A\lb x\rb $.

Set ${\wdh z}_{\alpha i}=F_{\alpha i}-F_{\alpha i}(0)\in (x)A\lb x\rb $, ${\wdh z}=({\wdh z}_{\alpha i})$ and $G:=G(y,u,z)=\sum_{\alpha i}(F_{\alpha i}(0)+z_{\alpha i})y^{\alpha} u^i$ for some new variables $z=(z_{\alpha i})$. We  have $G(y,u,{\wdh z})=F$. Set $G'=\partial G/\partial u$. As
$$G(y,f,0)\equiv G(y,f,{\wdh z})\equiv F(f)\equiv 0\ \mbox{ modulo}\ (\m,x,y,u),$$ $$G'(y,f,0)\equiv G'(y,f,{\wdh z})\equiv F'(f)\not \equiv 0 \ \mbox{modulo}\ (\m,x,y,u)$$ we get $G(y,0,z)\equiv 0$, $G'(y,0,z)\not \equiv 0$ modulo $(\m,y,z)A\langle y,z\rangle$. By the Implicit Function Theorem there exists $g\in (\m,y,z) A\langle y,z\rangle$
such that $G(y,g,z)=0$. It follows that $G(y,g(y,{\wdh z}),z)=0$. But $F=G(y,u,{\wdh z})=0$ has just one solution $u=f$ in $(\m,x,y)B$ by the Implicit Function Theorem and so $f=g(y, {\wdh z})$.

\end{proof}


The following result can be rephrased as a particular case of Theorem 12.6 \cite{BM}, and the proof we give here, for the sake of completeness,  follows essentially the same principle as the proof given in \cite{BM}.

\begin{prop}\label{p}
We set $x=(x_1, \ldots,x_n)$ and $y=(y_1, \ldots,y_m)$ and let $M$ be a submodule of $\k\lag x,y\rag^p$. Then
$$\k\lb x\rb(M\cap\k\lag x\rag^p) = \wdh M\cap \k\lb x\rb^p$$
where $\wdh M=\k\lb x,y\rb M$ denotes the $(x,y)$-adic completion of $M$.
Moreover, if $c\in \mathbb{ N}$ and   ${\wdh u}=\sum_{i=1}^r {\wdh v}_i \o_i\in \wdh M\cap \k\lb x\rb^p$  for some $\o_i\in M$, ${\wdh v}_i\in \k\lb x,y\rb$ then there exist $v_{ic}\in \k\langle x,y\rangle$ such that  $ v_{ic}\equiv {\wdh v}_i\  \mbox{modulo} \ (x,y)^c\k\lb x,y\rb$,  $u_c=\sum_{i=1}^t  v_{ic}\o_i\in M\cap \k\lag x \rag^p$ and
 $\wdh u$ is the limit of  $(u_c)_c$ in the $(x)$-adic topology.
\end{prop}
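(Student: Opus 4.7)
The plan is to prove the constructive ``moreover'' statement first and then deduce the module equality from it. For the equality, assuming the moreover part, the inclusion $\k\lb x\rb(M\cap\k\lag x\rag^p)\subseteq \wdh M\cap \k\lb x\rb^p$ is clear. Conversely, $M\cap\k\lag x\rag^p$ is finitely generated over the Noetherian ring $\k\lag x\rag$, so $\k\lb x\rb(M\cap\k\lag x\rag^p)$ is a finitely generated $\k\lb x\rb$-submodule of $\k\lb x\rb^p$ and is therefore closed in the $(x)$-adic topology by Artin--Rees. Any $\wdh u\in \wdh M\cap \k\lb x\rb^p$ is the $(x)$-adic limit of a sequence $(u_c)$ with $u_c\in M\cap \k\lag x\rag^p$ produced by the moreover part, so $\wdh u \in \k\lb x\rb(M\cap\k\lag x\rag^p)$.

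For the moreover part, fix generators $\o_1,\ldots,\o_r$ of $M$ as a $\k\lag x,y\rag$-module. The ring $R':=\k\lb x\rb\lag y\rag$ is Noetherian local with $(x,y)$-adic completion $\k\lb x,y\rb$, so $\k\lb x,y\rb$ is faithfully flat over $R'$. Applied to the $R'$-submodule of $R'^p$ generated by the $\o_i$, this yields $\wdh M\cap R'^p = R'\cdot\{\o_1,\ldots,\o_r\}$, and since $\wdh u\in\wdh M\cap \k\lb x\rb^p\subseteq \wdh M\cap R'^p$ there exist $v'_i\in R'$ with $\wdh u=\sum_i v'_i\o_i$; these play the role of the $\wdh v_i$ in the statement. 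Applying Lemma~\ref{lem1} with $A=\k$ to each $v'_i$ and merging the auxiliary variables into a single set $z=(z_1,\ldots,z_s)$, one obtains $g_i\in \k\lag y,z\rag$ and $\wdh z\in \k\lb x\rb^s$ with $v'_i = g_i(y,\wdh z)$.

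Introduce new unknowns $z$ and $u=(u_1,\ldots,u_p)$ and form
\[
F(x,y,z,u):=\sum_{i=1}^{r}g_i(y,z)\,\o_i(x,y)-u \;\in\; \k\lag x,y,z,u\rag^p.
\]
Expanding in powers of $y$ gives $F=\sum_{\gamma\in\N^m}F_\gamma(x,z,u)y^\gamma$ with $F_\gamma\in \k\lag x,z,u\rag^p$, and by construction $F_\gamma(x,\wdh z,\wdh u)=0$ for every $\gamma$. The decisive point is that $\k\lag x,z,u\rag$ is Noetherian, so the ideal generated by the components of all the $F_\gamma$ is finitely generated, and the infinite system $\{F_\gamma=0\}_\gamma$ has the same solutions as a finite subsystem. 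Ordinary Artin approximation over $\k\lag x\rag$ applied to this finite subsystem provides, for each $c\in\N$, $z_c\in\k\lag x\rag^s$ and $u_c\in\k\lag x\rag^p$ with $z_c\equiv\wdh z$, $u_c\equiv\wdh u$ modulo $(x)^c\k\lb x\rb$, and with $F_\gamma(x,z_c,u_c)=0$ for all $\gamma$, that is, $\sum_i g_i(y,z_c)\o_i=u_c$.

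Setting $v_{ic}:=g_i(y,z_c(x))\in \k\lag x,y\rag$ then yields $\sum_i v_{ic}\o_i=u_c\in M\cap\k\lag x\rag^p$, and $u_c\to \wdh u$ in the $(x)$-adic topology. A Taylor expansion of $g_i$ in the $z$-variables around $\wdh z$, combined with $z_c-\wdh z\in (x)^c\k\lb x\rb^s$, gives $v_{ic}\equiv v'_i =\wdh v_i$ modulo $(x)^c\k\lb x,y\rb$. The main obstacle to overcome is the Noetherian finiteness reduction of the infinite ``$y$-coefficient'' system $\{F_\gamma=0\}_\gamma$ to a finite subsystem, without which ordinary Artin approximation over $\k\lag x\rag$ would not apply and one would face exactly the kind of nested approximation problem addressed by the paper's main theorem.
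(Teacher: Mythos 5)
Your overall strategy coincides with the paper's: write $\wdh u=\sum \wdh v_i\o_i$, replace the coefficients by elements of $R':=\k\lb x\rb\lag y\rag$ using flatness, apply Lemma~\ref{lem1} to factor them through finitely many formal series $\wdh z$, expand in $y$, use Noetherianity to reduce to a finite system over $\k\lag x\rag$, apply ordinary Artin approximation, and close by Taylor's formula. Deducing the equality from the density statement via Artin--Rees instead of the cited closedness lemma is fine.

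However, there is one genuine gap in the ``moreover'' part. The statement fixes $\wdh v_i\in\k\lb x,y\rb$ in advance and requires the approximants $v_{ic}$ to satisfy $v_{ic}\equiv\wdh v_i$ modulo $(x)^c\k\lb x,y\rb$. You produce, via faithful flatness, \emph{some} $v'_i\in R'$ with $\wdh u=\sum v'_i\o_i$, and then assert ``these play the role of the $\wdh v_i$'', even writing $v'_i=\wdh v_i$ at the end. That identification is unjustified: the given $\wdh v_i$ are arbitrary formal series and your $v'_i$ come out of flatness with no relation to them, so you only prove that \emph{some} representation of $\wdh u$ by $R'$-coefficients can be approximated, not that the specified one can. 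The paper closes this gap by an extra step: having chosen any $v'\in R'^r$ with $v'-\wdh v\in(x,y)^c\k\lb x,y\rb^r$, it considers the enlarged linear system $Tv=\wdh u$, $v'-v=\sum_{|\alpha|+|\beta|=c}x^\alpha y^\beta w_{\alpha,\beta}$, which has the formal solution $(\wdh v,\wdh w)$, and applies faithful flatness of $R'\to\k\lb x,y\rb$ to this system to produce $\wdt v\in R'^r$ that both solves $T\wdt v=\wdh u$ and satisfies $\wdt v-\wdh v\in(x,y)^c$. Only then does it invoke Lemma~\ref{lem1}. This is not a cosmetic point: the ``moreover'' clause is used in the induction of Proposition~\ref{p1}, where the very same coefficients $v_{ic}$ must simultaneously control $q(\wdh u)$ and $q'(\wdh u)$, and that only works because they approximate the original $\wdh v_i$, not some unrelated choice. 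Your argument as written does not establish this and needs the paper's additional flatness step to be complete.
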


\begin{proof}
Of course we always have $\k\lb x\rb(M\cap\k\lag x\rag^p) \subset  \wdh M\cap \k\lb x\rb^p$. So we only have to prove the opposite inclusion. \\
Let $\o_1,\ldots, \o_r$ be generators of $M$ and
 $\wdh u(x)$ be an element of $\wdh M\cap \k\lb x\rb^p$. Such an element $\wdh u(x)$ has the form
\begin{equation}\label{eq_mod}\wdh u(x)=\sum_{\ell=1}^r\wdh v_{\ell}(x,y)\o_{\ell}\end{equation}
for some formal power series $\wdh v_{\ell}(x,y)$. The components of Equation \eqref{eq_mod} provide a system of $p$ linear equations as follows:
\begin{equation}\label{lin_eq}T \wdh v(x,y)=\wdh u(x)\end{equation}
where $T$ is a $p\times r$ matrix with entries in $\k\lag x,y\rag$ and $\wdh v(x,y)$ is the vector of entries $\wdh v_{\ell}(x,y)$. \\
The morphism $\k\lb x \rb\lag y \rag \lgw \k\lb x ,y \rb$ being faithfully flat, for any integer $c$ there exists a solution $\wdt v (x ,y )\in \k\lb x \rb\lag y \rag^{r}$ of \eqref{lin_eq} such that
$$\wdt v (x ,y )-\wdh v (x ,y )\in (x ,y )^c\k\lb x,y \rb ^r.$$
Indeed, choose $v'(x,y)\in \k\lb x \rb\lag y\rag^r$ such that
$v'(x,y)- \wdh v(x,y)\in (x,y)^c\k\lb x,y \rb$. By faithfully flatness the linear system
$$\wdh u(x)=\sum_{\ell=1}^r v_{\ell}\o_{\ell},\ v'(x,y)-v=\sum_{|\alpha|+|\beta|=c} x^{\alpha}y^{\beta} w_{\alpha,\beta}$$ has  a
solution $\wdt v(x,y)$, $\wdt w(x,y)$ in $\k\lb x \rb\lag y\rag$ since it has one in $\k\lb x,y \rb $.

Thus from now on we may assume that $\wdh v (x ,y )\in  \k\lb x \rb\lag y \rag^{r}$. By Lemma \ref{lem1}  there exist a new set of variables $z=(z_1, \ldots,z_s)$, algebraic power series $g_\ell(y ,z)\in\k\lag y ,z\rag$ for $1\leq \ell \leq r$ and formal power series $\wdh z_1(x ),\ldots,\wdh z_s(x )\in(x )\k\lb x \rb$ such that
$$\wdh v _\ell(x ,y )=g_\ell(y ,\wdh z_1(x ), \ldots,\wdh z_s(x )).$$

Then, by replacing $v_\ell$ by $g_\ell(y,z)$ for $\ell=1,\ldots,r$ in the linear system of equations $T\cdot v=\wdh u(x)$  we obtain  a new system of (non linear) equations
$$f(x ,y ,\wdh u (x ),\wdh z(x ))=0$$ where $f(x ,y ,u ,z)$ is a vector of algebraic power series.

Let $\mathcal I$ denote the ideal of $\k\lag x ,u ,z\rag$ generated by all the coefficients of the monomials in $y$ in the expansion of the components of $f$ as power series in $(y_1, \ldots,y_m)$. Let $h_1,\ldots, h_t$ be a system of generators of $\mathcal I$. By assumption $(\wdh u (x ),\wdh z(x ))$ is a formal power series solution of the system
\begin{equation}\label{eqqq}h_1(x ,u ,z) = \cdots = h_t(x ,u ,z)=0.\end{equation}

Thus by Artin Approximation Theorem for algebraic power series \cite{Ar69}, for any integer $c\geq 0$  there exists $(\wdt u (x ),\wdt z(x ))\in\k\lag x \rag^{p+s}$ solution of the system \eqref{eqqq} with
$$\wdt u_\kappa(x )-\wdh u_\kappa(x )\in (x)^c\k\lb x\rb , \wdt z_k(x )-\wdh z_k(x )\in (x )^c\k\lb x\rb \ \ \forall \kappa, k.$$

Thus $(\wdt u(x),\wdt v(x,y))$ is a solution of the system \eqref{lin_eq} where
$$\wdt v_\ell(x ,y) = g_\ell(y ,\wdt z_1(x ), \ldots,\wdt z_s(x )) \ \ \forall \ell.$$
In particular  $\wdt u(x)\in M\cap \k\lag x\rag^p$ and $M\cap\k\lag x\rag^p$ is dense in $\wdh M\cap\k\lag x\rag$.\\
Moreover by Taylor's formula we have that
$$\wdt v_\ell(x,y )-\wdh v_\ell(x,y)\in (x,y )^c \k\lb x,y\rb\ \text{ for } 1\leq \ell \leq r.$$

\end{proof}

The next theorem is a key result to reduce the proof of Theorem \ref{THM1} to the case of only one nest (i.e. when there is  $k\leq n$ such that $\s_i=k$ or $n$ for every $i$). This one nest case is Proposition \ref{p}. As we have previously said, the linear one nest case was proven by E. Bierstone and P. Milman (see Theorem 12.6 \cite{BM}). \\

\begin{thm}\label{p1} Let $M\subset \k\langle x\rangle^p$ be a finitely generated $\k\langle x\rangle$-submodule and $\s :\{1, \ldots,p\}\lgw \{1, \ldots,n\}$ be a weakly increasing function.  Then
$$\NN=M\cap (\k\langle x_1,\ldots,x_{\s(1)}\rangle\times \ldots \times \k\langle x_1,\ldots,x_{\s(p)}\rangle)$$
is dense in
$$\NN'=(\k\lb x\rb M)\cap (\k\lb x_1,\ldots,x_{\s(1)}\rb\times \ldots \times \k\lb x_1,\ldots,x_{\s(p)}\rb).$$
 Moreover, if $c\in \mathbb {N}$ and   ${\wdh u}=\sum_{i=1}^t {\wdh v}_i \o_i\in {\NN'}$  for some $\o_i\in M$, ${\wdh v}_i\in \k\lb x\rb$ then there exist $v_{ic}\in \k\langle x\rangle$ such that  $ v_{ic}\equiv {\wdh v}_i\  \mbox{modulo} \ (x)^c\k\lb x\rb$,  $u_c=\sum_{i=1}^t  v_{ic}\o_i\in \NN$ and
 $\wdh u$ is the limit of  $(u_c)_c$ in the $(x)$-adic topology.
\end{thm}

\begin{proof}
Apply induction on $p$, the case $p=1$ being
done in Proposition \ref{p}.  Assume that $p>1$. We may reduce to the case when $\s(p)=n$ replacing $M$ by $M\cap \k\langle x_1,\ldots,x_{\s(p)}\rangle^p$ if $\s(p)<n$. Let
$$q:\k\lb x\rb^p\to \k\lb x\rb^{p-1}$$  be the projection on the first $p-1$ components and
$$q':\k\lb x\rb^p\to \k\lb x\rb$$
be the projection on the last component.
Let ${\wdh u}=({\wdh u_1},\ldots, {\wdh u}_p)\in {\NN'},$ and $M_1=q(M)$.
Assume that  ${\wdh u}=\sum_{i=1}^t  {\wdh v}_i\o_i$ for some ${\wdh v}_i\in \k\lb x\rb$, $\o_i\in M$.
By the induction hypothesis applied to $M_1$ and $q(\wdh u)$, for every  $c\in \mathbb{ N}$ there exists  $ v_{ic}\in \k\langle x\rangle$ with $v_{ic}\equiv {\wdh v}_i\ \mbox{modulo}\ (x)^c\k\lb x\rb$
 such that
 $$u'_c=\sum_{i=1}^t  v_{ic}q(\o_i)\in q(\NN)=M_1\cap (\k\langle x_1,\ldots,x_{\s(1)}\rangle\times \ldots \times \k\langle x_1,\ldots,x_{\s(p-1)}\rangle)$$
 and $q({\wdh u})$ is  the limit of  $(u'_c)_c$ in the $(x)$-adic topology.

Now,  let $u''_c=\sum_{i=1}^t v_{ic}q'(\o_i)\in \k\langle x_1,\ldots,x_n\rangle $. We have $u''_c\equiv q'({\wdh u})$ modulo $(x)^c\k\lb x\rb$. Then  $u_c=(u'_c,u''_c)=\sum_{i=1}^t v_{ic}\o_i\in \NN$ since $\s(p)=n$, $u_c\equiv {\wdh u}$ modulo $(x)^c\k\lb x\rb^{p}$  and $\wdh u$ is the limit of  $(u_c)_c$ in the $(x)$-adic topology.

\end{proof}

\begin{proof}[Proof of Theorem \ref{THM1}]
First of all we may assume that $\s$ is weakly increasing after permuting the $y_i$.\\
If $b=0$  then it is enough
to apply Theorem \ref{p1} for the module $M$ of the solutions of $Ty=0$ in $A=\k\langle x\rangle$. Suppose that $b\neq 0$. Replace the system $Ty=b$ by the homogeneous system of linear polynomials
$$T'y':=Ty-by_0=0$$
 from $A[y_0,y]^p$ where $y'=(y_0,y)$. A nested formal solution $\wdh y$ of $Ty=b$ in $\k\lb x\rb^{m}$ with  ${\wdh y}_i\in \k\lb x_1,\ldots,x_{\s(i)}\rb$, $1\leq i\leq m$ induces a nested formal solution $({\wdh y}_0,{\wdh y}) $, ${\wdh y}_0=1$ of $T'y'=0$ with $\s(0)=\s(1)$. As above, for all $c\in \N$ we get a nested algebraic   solution $(y_0(x),y(x))$ of $T'y'=0$ with $y_i(x)\in \k\langle x_1,\ldots,x_{\s(i)}\rangle$ and $y_i(x)\equiv {\wdh y}_i\ \mbox{modulo}\ (x)^c\k\lb x\rb$ for all $0\leq i\leq m$. In particular $y_{0}(0)=1\neq 0$ and $y_{0}(x)$ is a unit.
 Thus
$$(y_{0}(x)^{-1}y_{1}(x), \ldots,y_{0}(x)^{-1}y_{m}(x))$$
is an algebraic nested solution of $Ty-b=0$. Moreover, for all $j\geq 1$, we have:
$$y_{0}(x)^{-1}y_{j}(x)-\wdh y_j(x)=(y_{0}(x)^{-1}-1)y_{j}(x)+(y_{j}(x)-\wdh y_j(x))\in (x)^c.$$
\end{proof}



\section{Linear nested approximation property}

In the second part of this paper we generalize the method used to prove Theorem \ref{THM1} in order to show that the question of A. Grothendieck, for local subrings of the ring of formal power series,  is equivalent to the nested Artin approximation property for linear equations. We begin by giving several definitions.

\begin{defi}\label{adm}
Let $\k$ be a field. An \emph{admissible family of rings} is an increasing sequence of rings $\F=(R_n)_{n\in\N}$ satisfying  the following properties:
\begin{enumerate}
\item For every integer $n\geq 0$ the ring $R_n$ is a $\k$-subalgebra of $\k\lb x_1, \ldots,x_n\rb$ (in particular $R_0=\k$).
\item For every integer $n\geq 0$, $\k[x_1, \ldots,x_n]\subset R_n$.
\item For every integer $n> 0$ the ring $R_n$ is a Noetherian  local ring whose maximal ideal is generated by $x_1$,\ldots, $x_n$.
\item For every integer $n$ the completion of $R_n$ is $\k\lb x_1,\ldots,x_n\rb$.
\item For every integers $m,n$ with $0\leq m\leq n$ we have
$$R_n\cap \k\lb x_1, \ldots,x_m\rb=R_m.$$
\end{enumerate}
When an admissible family of rings is given, any element of a member of this family is called an \emph{admissible power series}.
\end{defi}

Sometimes we will emphasize the dependency of $R_n$ on the variables $(x_1,\ldots,x_n)$ by writing $R_n= \k\lg x_1, \ldots,x_n\rg$ for $n\in \N$.

\begin{example}
The following families of rings are admissible:
\begin{itemize}
\item The rings of convergent power series over a valued field $\k$.
\item The rings of algebraic power series over a field $\k$.
\item The rings of formal power series.
\item The rings of  germs of rational functions at  $0\in\k^n$, $\k[x_1, \ldots,x_n]_{(x_1, \ldots,x_n)}$.
\end{itemize}
\end{example}


\subsection{Krull topology}

Let $(A,\m)$ be a Noetherian local ring. The \emph{Krull topology} of $A$ is the topology in which the ideals $\m^c$ constitute a basis of neighborhoods of the zero of $A$. For a $A$-module $M$ the Krull topology of $M$  is the one in which the submodules $\m^cM$ constitute a basis of neighborhoods of the zero of $M$. The completion of $A$ (resp. $M$) for the Krull topology is denoted by $\wdh A$ (resp. $\wdh M)$. We have the following lemma asserting that the topological closure of a finite module and its completion coincide:

\begin{lem}\label{ZS}(\cite[Corollary 2, p. 257]{SZ})
If $N$ is a $A$-submodule of  a finite $A$-module $M$ then the closure of $N$ in $\wdh M$ is  $\wdh N=\wdh A N$.
\end{lem}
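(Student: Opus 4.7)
The plan is to identify $\wdh{A}N$ with the kernel of the continuous projection $\wdh{M}\to\wdh{M/N}$, observe that this kernel is closed because $\wdh{M/N}$ is Hausdorff, and finally check that $N$ is dense in $\wdh{A}N$. The equality of the two expressions $\wdh{N}$ and $\wdh{A}N$ in the statement will fall out of the same analysis.

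First I would use that $A$ is Noetherian and that $M$, $N$, $M/N$ are finitely generated in order to apply the functor $\wdh{A}\otimes_A-$ to the short exact sequence $0\to N\to M\to M/N\to 0$. Exactness of completion in this setting (either via flatness of $\wdh{A}$ over $A$, or directly via the Artin--Rees lemma applied to the $\m$-adic filtration on $M$ restricted to $N$) yields the short exact sequence $0\to\wdh{N}\to\wdh{M}\to\wdh{M/N}\to 0$, and in particular identifies $\wdh{N}=\wdh{A}\otimes_A N$ with its image $\wdh{A}N\subset\wdh{M}$. This already gives the second equality $\wdh{N}=\wdh{A}N$ in the statement.

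Next, by Krull's intersection theorem applied to the finitely generated module $M/N$ over the Noetherian local ring $A$, one has $\bigcap_c\m^c\wdh{M/N}=0$, so $\wdh{M/N}$ is Hausdorff in its $\m$-adic topology. Consequently the kernel $\wdh{A}N$ of the continuous projection $\wdh{M}\to\wdh{M/N}$ is closed in $\wdh{M}$, and since it contains $N$ it contains the closure of $N$ in $\wdh{M}$. For the reverse inclusion I would show that $N$ is dense in $\wdh{A}N$: by Artin--Rees the subspace topology on $N$ induced from $\wdh{M}$ coincides with the intrinsic $\m$-adic topology of $N$, so the canonical inclusion $N\hookrightarrow\wdh{M}$ extends continuously to $\wdh{N}$, and this extension is precisely the identification $\wdh{N}\simeq \wdh{A}N$ of the previous step. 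Since $N$ is dense in its own completion, it is dense in $\wdh{A}N$, and the two inclusions together yield the claim.

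The main technical point I expect is precisely this comparison of two a priori distinct topologies on $\wdh{N}$: the one coming from its intrinsic $\m$-adic structure and the one it inherits as a submodule of $\wdh{M}$. This is where the Artin--Rees lemma is indispensable, and it is also what underlies the compatibility of completion with the inclusion $N\subset M$ used in the very first step.
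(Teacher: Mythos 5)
The paper does not prove this lemma; it simply cites it from Zariski--Samuel. Your argument is correct and is the standard one: flatness of $\wdh A$ over $A$ (a consequence of Artin--Rees over a Noetherian ring) turns the short exact sequence $0\to N\to M\to M/N\to 0$ into $0\to\wdh N\to\wdh M\to\wdh{M/N}\to 0$, identifying $\wdh N$ with $\wdh A N$; Hausdorffness of $\wdh{M/N}$ makes $\wdh A N$ closed in $\wdh M$; and the comparison of topologies given by Artin--Rees shows $N$ is dense in $\wdh A N$. One small simplification: to see that $N\hookrightarrow\wdh M$ factors continuously through $\wdh N$, you do not actually need Artin--Rees --- the composite $N\to M\to\wdh M$ sends $\m^c N$ into $\m^c\wdh M$, hence is $\m$-adically continuous and extends to the completion for free; Artin--Rees is genuinely used only for the injectivity of $\wdh N\to\wdh M$ (via flatness) and for density.
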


\begin{defi}
If $M$ is a $A$-module where $(A,\m)$ is a Noetherian local ring and $E$ is a subset of $M$, we say that an element $f\in M$ may \emph{be approximated by elements of $E$} if $f$ is in the closure (for the Krull topology) of $E$ in $M$, i.e. if for every integer $c$ there exists $f_c\in E$ such that $f-f_c\in \m^c M$.
\end{defi}


\subsection{Strong elimination property}\label{sep}
One says that an admissible family of rings $\F=(\k\lg x_1, \ldots,x_n\rg)_n$ has the \emph{strong elimination property for ideals}  if for every two sets of variables $x$ and $y$ and every ideal $I$ of $\k\lg x,y\rg$ we have
\begin{equation}\label{sei}(I\cap \k\lg x\rg)\k\lb x\rb=\wdh I\cap\k\lb x\rb\end{equation}
where $\wdh I$ denotes the ideal of $\k\lb x,y\rb$ generated by $I$.\\
One says that the admissible family $\F$ has the \emph{strong elimination property for modules}  if for every two sets of variables $x$ and $y$, every positive integer $p$ and every $\k\lg x,y\rg$-submodule $M$ of $\k\lg x,y\rg^p$ we have
\begin{equation}\label{sem}\k\lb x\rb(M\cap \k\lg x\rg^p)=\wdh M\cap\k\lb x\rb^p\end{equation}
where $\wdh M$ denotes the $\k\lb x,y\rb$-submodule of $\k\lb x,y\rb^p$ generated by $M$.\\

\begin{rem}\label{rmk_elim}Since $I\cap\k \lg x\rg\subset \wdh I\cap\k\lb x\rb$ (resp. $M\cap\k \lg x\rg^p\subset \wdh M\cap\k\lb x\rb^p$ , Lemma \ref{ZS} shows that \eqref{sei} (resp. \eqref{sem}) is equivalent to say that the elements of $\wdh I\cap\k\lb x\rb$ (resp. $\wdh M\cap\k\lb x\rb^p$) may be approximated by elements of $I\cap\k\lg x\rg$ (resp. $M\cap\k\lg x\rg^p$).
\end{rem}


\subsection{Linear nested approximation property}\label{lnap}
We say that an admissible family of rings $\F=(\k\lg x_1, \ldots,x_n\rg)_n$ has the \emph{linear nested approximation property} if the following property holds:\\
For every positive integers $m,n,p$,
every $p\times m$ matrix $T$   with entries in $\k\lg x\rg:=\k\lg x_1, \ldots,x_n\rg$, 
every $b=(b_1,\ldots,b_p) \in \k\lg x\rg^p$ and every map $\s :\{1, \ldots,m\}\lgw \{1, \ldots,n\}$ we have the following: let $y=(y_1, \ldots,y_m)$ be a vector of new variables.
Then the set of solutions $y(x)$ in  $$ \k\lg  x_1, \ldots,x_{\s(1)}\rg \times \cdots\times\k\lg  x_1, \ldots,x_{\s(m)}\rg $$ of the following system of linear equations
\begin{equation}\label{eq} Ty=b\end{equation}
is dense in the set of formal solutions in
$$\k\lb x_1, \ldots,x_{\s(1)}\rb\times \cdots\times\k\lb x_1, \ldots,x_{\s(m)}\rb.$$


\subsection{Strongly injective morphisms}\label{sim}
\begin{defi} \label{strongly-injective}
Let $\phi : A\lgw B$ be a morphism of local rings. We denote by $\wdh \phi$ the induced morphism $\wdh A\lgw \wdh B$. One says that $\phi$ is \emph{strongly injective} if $\wdh \phi$ is injective.
\end{defi}


\begin{defi}
We say that an admissible family of rings $\F=(\k\lg x_1, \ldots,x_n\rg)_n$ has the \emph{strong injectivity property} if for every integers $n$ and $m$ and every ideals $I$ of $\k\lg x_1, \ldots,x_n\rg$ and $J$ of $\k\lg y_1, \ldots,y_m\rg$,  every injective morphism of local rings
$$\frac{\k\lg x\rg}{I}\lgw \frac{\k\lg y\rg}{J}$$
is strongly injective.
\end{defi}

\begin{rem}
Definition \ref{strongly-injective} is not the classical one. In \cite{A-vdP} a morphism $\phi : A\lgw B$ is called strongly injective if $\wdh \phi(\wdh A)\cap B=\phi(A)$. This definition, which is the classical one, is stronger than the one we use in this paper. Nevertheless we will prove that if an admissible family of rings $(\k\lg x_1, \ldots,x_n\rg)_n$ has the strong injectivity property then for any morphism of local rings $\phi : A=\dfrac{\k\lg x\rg}{I}\lgw B=\dfrac{\k\lg y\rg}{J}$ we have $\wdh \phi(\wdh A)\cap B=\phi(A)$ (see Corollary \ref{cor}).
\end{rem}


The main result of this part is the following:

\begin{thm}\label{Thm1}
For an admissible family of rings $\F=(\k\lg x_1, \ldots,x_n\rg)_n$  the following properties are equivalent:
\begin{enumerate}
\item[(i)] $\F$ has the strong elimination property for ideals.
\item[(ii)] $\F$ has the strong elimination property for modules.
\item[(iii)] $\F$ has the linear nested approximation property.
\item[(iv)] $\F$ has the strong injectivity property.
\end{enumerate}
\end{thm}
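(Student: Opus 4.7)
The plan is to establish the four-way equivalence via the chains (i)$\Leftrightarrow$(ii), (i)$\Leftrightarrow$(iv), and (ii)$\Leftrightarrow$(iii). For (i)$\Leftrightarrow$(ii), the direction (ii)$\Rightarrow$(i) is immediate by taking $p=1$. For (i)$\Rightarrow$(ii), given a submodule $M\subset\k\lg x,y\rg^p$ with generators $\omega_1,\ldots,\omega_r$, I introduce dummy variables $t=(t_1,\ldots,t_p)$ and form the ideal
\[
I=(t_it_j:1\le i,j\le p)+\Bigl(\textstyle\sum_{j=1}^p\omega_{i,j}t_j:1\le i\le r\Bigr)\subset\k\lg x,y,t\rg.
\]
A direct analysis of the $t$-linear elements of $I$ shows
\[
I\cap\k\lg x,t\rg=(M\cap\k\lg x\rg^p)\cdot(t_1,\ldots,t_p)+(t_it_j)\k\lg x,t\rg,
\]
and analogously $\wdh I\cap\k\lb x,t\rb=(\wdh M\cap\k\lb x\rb^p)\cdot(t_1,\ldots,t_p)+(t_it_j)\k\lb x,t\rb$. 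Applying (i) to $I$ with elimination of $y$ and matching the coefficients of the $t_j$ yields (ii).

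For (i)$\Leftrightarrow$(iv): the direction (iv)$\Rightarrow$(i) follows by applying strong injectivity to the tautological inclusion $\k\lg x\rg/(I\cap\k\lg x\rg)\hookrightarrow\k\lg x,y\rg/I$. Conversely, given an injective morphism $\phi:\k\lg x\rg/I\to\k\lg y\rg/J$ realised by a ring map $\psi:x_i\mapsto f_i(y)$, consider the graph ideal $K=J\k\lg x,y\rg+(x_i-f_i(y):1\le i\le n)$. A Taylor-style decomposition $g(x)-g(f(y))\in(x_i-f_i(y))_i$ combined with $\psi^{-1}(J)=I$ gives $K\cap\k\lg x\rg=I$, so (i) yields $\wdh K\cap\k\lb x\rb=\wdh I$; the same decomposition for formal series shows that $\wdh g\in\k\lb x\rb$ lies in $\ker\wdh\phi$ iff $\wdh g(f(y))\in\wdh J$ iff $\wdh g\in\wdh K\cap\k\lb x\rb=\wdh I$, whence $\wdh\phi$ is injective.

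For (iii)$\Rightarrow$(ii), given any decomposition $\wdh u=\sum_i\wdh v_i\omega_i\in\wdh M\cap\k\lb x\rb^p$, I form the linear system $\sum_iv_i\omega_i-u=0$ over $\k\lg x,y\rg$ in unknowns, ordered as $(u_1,\ldots,u_p,v_1,\ldots,v_r)$, with nested function $\s'(u_j)=n$ and $\s'(v_i)=n+m$, so that the $u_j$ are constrained to $\k\lg x\rg$ while the $v_i$ are unrestricted. The pair $(\wdh v,\wdh u)$ is a nested formal solution; (iii) produces algebraic nested solutions $(v_c,u_c)$ whose $u$-part $u_c\in M\cap\k\lg x\rg^p$ approximates $\wdh u$.

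The main step, and the hardest part, is (ii)$\Rightarrow$(iii). After reducing to $b=0$ as in the proof of Theorem \ref{THM1}, I follow the induction on $p$ of Proposition \ref{p1}: reorder so $\s$ is weakly increasing, replace $M$ by $M\cap\k\lg x_1,\ldots,x_{\s(p)}\rg^p$ so that $\s(p)=n$, and project to the first $p-1$ components. The base case $p=1$ reduces to (i) combined with Lemma \ref{ZS}. The delicate point is propagating the ``Moreover'' clause of Proposition \ref{p1} in the abstract setting, which I derive from (ii) by a syzygy argument. For $\wdh u=\sum_i\wdh v_i\omega_i$, pick $u_c=\sum_iw_{i,c}\omega_i\in M\cap\k\lg x_1,\ldots,x_{\s(p)}\rg^p$ approximating $\wdh u$ modulo $(x)^c$ via (ii); the Artin--Rees lemma applied to $\wdh M\subset\k\lb x\rb^p$ gives $\wdh u-u_c\in(x)^{c-c_0}\wdh M$ for a constant $c_0$, and flatness of $\k\lb x\rb$ over $\k\lg x\rg$ then yields $\wdh v-w_c\in\wdh S+(x)^{c-c_0}\k\lb x\rb^r$, where $S$ is the syzygy module of the $\omega_i$. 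Writing $\wdh v-w_c=\wdh s+\wdh\rho$ with $\wdh s\in\wdh S$ and $\wdh\rho\in(x)^{c-c_0}\k\lb x\rb^r$, Lemma \ref{ZS} applied to $S$ provides an algebraic $s_c\in S$ with $s_c\equiv\wdh s$ modulo $(x)^{c-c_0}$, so that $v_c=w_c+s_c\in\k\lg x\rg^r$ satisfies $\sum_iv_{i,c}\omega_i=u_c$ and $v_c\equiv\wdh v$ modulo $(x)^{c-c_0}$, yielding the ``Moreover'' clause after re-indexing $c$ and closing the induction.
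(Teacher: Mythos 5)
Your proposal is correct but follows a genuinely different route from the paper in several of the implications, so a comparison is in order. The paper proves the single cycle $(\text{i})\Rightarrow(\text{ii})\Rightarrow(\text{iii})\Rightarrow(\text{iv})\Rightarrow(\text{i})$; you instead prove three pairwise equivalences. For $(\text{i})\Rightarrow(\text{ii})$ both you and the paper use Nagata's idealization, and the constructions are essentially the same (the paper's Lemma \ref{lem_inter} is a slightly more general version than what $(\text{ii})$ needs because it will be reused in the inductive step). Your $(\text{i})\Leftrightarrow(\text{iv})$ is new relative to the paper's route: instead of going through $(\text{iii})$, you pass directly via the graph ideal $K=J\k\lg x,y\rg+(x_i-f_i(y))_i$, observe $K\cap\k\lg x\rg=I$ from injectivity of $\phi$, and then match $\ker\wdh\phi$ with $\wdh K\cap\k\lb x\rb$; this is a clean shortcut that buys you a direct proof of strong injectivity from elimination alone, whereas the paper first develops the linear nested approximation property and uses it in $(\text{iii})\Rightarrow(\text{iv})$. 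Your direct $(\text{iii})\Rightarrow(\text{ii})$ by encoding module membership $\sum v_i\omega_i-u=0$ as a linear nested system is also not in the paper (the paper gets $(\text{ii})$ from $(\text{iii})$ only by going around the cycle); it works because $u_c\in\k\lg x\rg^p$ and $u_c=\sum v_{i,c}\omega_i\in M$ simultaneously. The most significant divergence is $(\text{ii})\Rightarrow(\text{iii})$: the paper isolates a Chevalley-type lemma (Lemma \ref{chevalley}) for submodules of $\k\lb x\rb^{p+t}$ to control the last coordinate in the inductive step, whereas you avoid Chevalley entirely by propagating the ``Moreover'' clause of Proposition \ref{p1} through the induction and using an Artin--Rees plus syzygy argument (flatness of $\k\lb x\rb$ over $\k\lg x\rg$ and Lemma \ref{ZS} applied to the syzygy module). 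This mirrors the explicit-coefficients strategy of Propositions \ref{p}--\ref{p1} in the algebraic case, lifted to the abstract setting, and is arguably more transparent than the Chevalley route.

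One piece of the writing needs tightening, though it does not affect the validity of the approach. In the passage ``pick $u_c=\sum_iw_{i,c}\omega_i\in M\cap\k\lg x_1,\ldots,x_{\s(p)}\rg^p$ approximating $\wdh u$ modulo $(x)^c$ via (ii)'', invoking $(\text{ii})$ alone only produces $u_c\in M\cap\k\lg x\rg^p$, not an element of the nested submodule $\NN$; the syzygy argument then only recovers the non-nested ``Moreover'' clause of Proposition \ref{p}, not the nested one of Proposition \ref{p1}. What you actually need is: for $p=1$ the nested $u_c$ comes from $(\text{i})$ directly (elimination into $\k\lg x_1,\ldots,x_{\s(1)}\rg$), and for $p>1$ the nested $u_c$ comes from the inductive hypothesis (the ``Moreover'' clause at level $p-1$ applied to $q(M)$, followed by the trivial lift along $\omega_i$, which is nested because $\s(p)=n$); the syzygy step then upgrades to matching coefficients. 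Phrased this way the induction closes cleanly: basic at $p-1$ gives Moreover at $p-1$ via syzygies, which gives basic (indeed Moreover) at $p$ via the trivial lift. You should also record that when reducing to $\s(p)=n$ by replacing $M$ with $M\cap\k\lg x_1,\ldots,x_{\s(p)}\rg^p$, the identity $\wdh M\cap\k\lb x_1,\ldots,x_{\s(p)}\rb^p=\k\lb x_1,\ldots,x_{\s(p)}\rb\bigl(M\cap\k\lg x_1,\ldots,x_{\s(p)}\rg^p\bigr)$ from $(\text{ii})$ is what allows you to re-express $\wdh u$ over the reduced module; once that is said, the argument is complete and correct.
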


\begin{cor}\label{cor}
Let $\F=(\k\lg x_1, \ldots,x_n\rg)_n$ be an admissible family having the strong injectivity property. Then for any morphism of local rings
$$\phi : A=\frac{\k\lg x\rg}{I}\lgw B=\frac{\k\lg y\rg}{J}$$
we have
$$\wdh \phi(\wdh A)\cap B= \phi(A).$$
In particular if $\wdh \varphi$ is surjective then $\varphi$ is surjective too.
\end{cor}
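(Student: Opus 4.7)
The plan is to establish $\wdh\varphi(\wdh A)\cap B\subseteq \varphi(A)$, the reverse inclusion being immediate. Given $b\in \wdh\varphi(\wdh A)\cap B$ with $b=\wdh\varphi(\wdh a)$, first subtract the constant term $\wdh a(0)\in \k\subseteq A$ from both $\wdh a$ and $b$ so that both lie in the respective maximal ideals. The strategy is then to recode the equality $b=\wdh\varphi(\wdh a)$ as a linear nested equation whose formal solution $\wdh a$ can be approximated by an admissible one.

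Introduce a fresh variable $t$ and the local morphism $\psi : \k\lg x,t\rg \lgw B$ sending $x_i\mapsto \varphi(\bar x_i)$ and $t\mapsto b$, where $\bar x_i$ denotes the class of $x_i$ in $A$. Let $J':=\ker\psi$, so that $\k\lg x,t\rg/J'\hookrightarrow B$. Because Noetherian completion is exact, $\wdh\psi$ descends to a map $\k\lb x,t\rb/J'\k\lb x,t\rb\to \wdh B$, which is injective by the strong injectivity hypothesis; hence $\ker\wdh\psi = J'\k\lb x,t\rb$. Lifting $\wdh a$ to $\tilde a\in \k\lb x\rb$, we have $\wdh\psi(\tilde a-t)=\wdh\varphi(\wdh a)-b=0$, so $\tilde a-t$ lies in $\ker\wdh\psi$. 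Picking generators $g_1,\ldots,g_r$ of $J'$, this gives $\tilde a-t=\sum_i \wdh u_i g_i$ for some $\wdh u_i\in \k\lb x,t\rb$.

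The key move is to reinterpret this identity as a linear nested system in the unknowns $(v,u_1,\ldots,u_r)$ over $\k\lg x,t\rg$:
$$ v - \sum_{i=1}^r u_i g_i = t, $$
with the weakly increasing nesting $\s(v)=n$ and $\s(u_i)=n+1$. By Theorem \ref{Thm1}, the strong injectivity hypothesis implies the linear nested approximation property; applying it to the formal solution $(\tilde a,\wdh u_1,\ldots,\wdh u_r)$ produces an admissible solution $(v,u_1,\ldots,u_r)$ with $v\in \k\lg x\rg$ and $u_i\in \k\lg x,t\rg$. Then $v-t=\sum u_i g_i\in J'$, so $\psi(v)=\psi(t)=b$, and the class $a$ of $v$ in $A$ satisfies $\varphi(a)=b$. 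The ``in particular'' part is immediate: if $\wdh\varphi$ is surjective, every $b\in B\subseteq \wdh B$ equals $\wdh\varphi(\wdh a)$ for some $\wdh a\in \wdh A$, hence lies in $\wdh\varphi(\wdh A)\cap B=\varphi(A)$.

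The main obstacle I anticipate is not a computation but the formulation step: the equation $\tilde a-t=\sum \wdh u_i g_i$ features $\tilde a$ only as a formal power series, so LNAP cannot be invoked directly on the original identity. Promoting $\tilde a$ to an unknown $v$ with strictly smaller nesting level is what turns the relation into a genuinely nested linear system, and this is exactly the scenario the nested hypothesis is designed to handle.
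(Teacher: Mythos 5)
Your proof takes a genuinely different route from the paper's. The paper's proof lifts the images $\phi(\bar x_i)$ to admissible power series $\phi_i(y)\in\k\lg y\rg$, picks generators $q_k(y)$ of $J$, and uses Taylor's formula to write
$$\wdh f(x)=b(y)+\sum_k q_k(y)\wdh l_k(x,y)+\sum_i(x_i-\phi_i(y))\wdh k_i(x,y),$$
then applies the linear nested approximation property to this equation over $\k\lg x,y\rg$ with nesting $\s(f)=n$, $\s(l_k)=\s(k_i)=n+m$, and finally substitutes $x_i\mapsto\phi_i(y)$. In other words, the paper makes the whole construction explicit in terms of the generators of $J$ and the coordinates of $\phi$, exactly mirroring the proof of (iii)$\Rightarrow$(iv). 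You instead pass to a fresh variable $t$ and to the kernel $J'$ of an auxiliary morphism $\psi$, invoke strong injectivity a second time (directly on $\psi$, in addition to its use via Theorem \ref{Thm1}), and then set up a smaller linear nested system over $\k\lg x,t\rg$. The resulting system is conceptually tidier (one equation, no Taylor expansion) but trades this for the extra strong-injectivity application. Both approaches use LNAP at exactly one point, and both conclude in the same way once an admissible nested solution is produced.

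There is, however, a gap you should address: you take for granted that the assignment $x_i\mapsto\phi(\bar x_i)$, $t\mapsto b$ extends to a local $\k$-algebra morphism $\psi : \k\lg x,t\rg\lgw B$. This is not automatic for an abstract admissible family. For a general $f\in\k\lg x,t\rg$, defining $\psi(f)$ amounts to showing that $f(\phi_1(y),\ldots,\phi_n(y),b(y))$ (a formal power series obtained by substitution into lifts $\phi_i, b\in\k\lg y\rg$) represents an element of $B$ inside $\wdh B$; equivalently, that $\k\lg\cdot\rg$ is closed under substitution of admissible power series vanishing at the origin. That closure property holds for the concrete families in the paper's examples (algebraic, convergent, formal) but is not among the axioms of Definition \ref{adm}, and in fact $\psi$ mapping into $B$ rather than merely into $\wdh B$ is uncomfortably close to the very conclusion you are trying to prove. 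The paper's proof sidesteps this: it never needs to evaluate an admissible series at $\phi_i(y)$ inside $\k\lg y\rg$, because the final substitution step is carried out entirely in $\k\lb y\rb$ and the conclusion is pulled back along the injection $B\hookrightarrow\wdh B$. To repair your argument at the paper's level of generality you would either need to add a closure-under-substitution hypothesis, or replace the construction of $\psi$ by the paper's explicit presentation of the relation via generators of $J$ and Taylor's formula.
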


\begin{proof}
Clearly $\phi(A)\subset \wdh \phi(\wdh A)\cap B$. Let us prove the reverse inclusion.\\
We can replace $\phi$ by $\phi\circ\pi$ where $\pi : \k\lg x\rg\lgw \dfrac{\k\lg x\rg}{I}$ is the natural quotient morphism. This allows us to assume that $I=0$ and $A=\k\lg x\rg$. Let $\wdh f\in \k\lb x\rb$ such that $\wdh \phi(\wdh f)=b\in B$. Let us denote by
$\phi_i(y)$ an admissible power series of $\k\lg y\rg$ which is the image of $x_i$ by $\phi$ modulo $J$, for $i=1,\ldots,n$. Let  $q_1(y),\ldots, q_s(y)$ be generators of $J$. Thus, by assumption, there exist formal power series $\ \wdh{l}_j, \wdh{k}_i$, for $1\leq j\leq s$ and $1\leq i\leq n$,
such that
$$\wdh f(x)=b(y)+\sum_{j=1}^sq_j(y) \wdh l_j(x,y)+\sum_{i=1}^n(x_i-\phi_i(y)) \wdh k_i(x,y).$$
By the previous theorem the family of rings $\F$ has the linear nested approximation property, thus there exist admissible power series
$$f(x),  l_j(x,y), k_i(x,y)$$
such that
$$ f(x)=b(y)+\sum_{j=1}^sq_j(y)  l_j(x,y)+\sum_{i=1}^n(x_i-\phi_i(y))  k_i(x,y).$$
In particular, by replacing $x_i$ by $\phi_i(y)$ for all $i$ we see that
$\phi(f)=b$.
Thus $b\in \phi(\k\lg x\rg)$.
\end{proof}

\begin{rem}
Let $\F=(R_n)_n$ be an admissible family. Let $f\in R_n$ such that $f(0)=0$ and $\dfrac{\partial f}{\partial x_n}(0)\neq 0$. By the Implicit Function Theorem for formal power series there exists a unique formal power series $h(x')$ with $x'=(x_1, \ldots,x_{n-1})$ such that
$$f(x',h(x'))=0\text{ and } h(0)=0.$$
Thus, by Taylor's formula, there exists a formal power series $g(x)$ such that
$$f(x)+(x_n-h(x'))g(x)=0.$$
Since $\dfrac{\partial f}{\partial x_n}(0)\neq 0$ and $h(0)=0$ we have $g(0)\neq 0$, i.e. $g(x)$ is a unit. Hence we have, where $u(x)$ denotes the inverse of $g(x)$:
$$f(x)u(x)+x_n-h(x')=0.$$
Moreover, since $h(x')$ is unique, $u(x)$ is also unique and the linear equation
$$f(x)y_2+x_n-y_1=0$$
has a unique nested formal solution $(h(x'),u(x))$ whose first component vanishes at 0. Thus if the family $\F$ satisfies the equivalent properties of Theorem \ref{Thm1} then this family has to satisfy the Implicit Function Theorem (which is equivalent to say that the rings $R_n$ are Henselian local rings). \\
In particular the family of germs of rational functions at the origin of $\k^n$ does not satisfy the properties of Theorem \ref{Thm1}.\\
Since the ring of algebraic power series in $n$ variables is the Henselization of the ring of germs of rational functions at the origin of $\k^n$, this also shows that the family of algebraic power series is the smallest admissible family containing the family of germs of rational functions at the origin of $\k^n$ and satisfying the properties of Theorem \ref{Thm1} (by Theorem \ref{THM1}).
\end{rem}

\begin{rem}
Let $\F=(R_n)_n$ be an admissible family and $f$, $g$ two elements of $R_n$. Let us assume that $f$ is $x_n$-regular of order $d$, i.e. $f(0,x_n)=x_n^du(x_n)$ for some unit $u(x_n)$. By the Weierstrass division Theorem for formal power series there exists a unique vector
$$(q(x),a_0(x'),\ldots,a_{d-1}(x'))\in \k\lb x\rb\times \k\lb x'\rb^d$$
with $x'=(x_1,\ldots,x_{n-1})$ such that
$$g(x)=f(x)q(x)+\sum_{\kappa=0}^{d-1}a_\kappa(x')x_n^\kappa.$$
By the uniqueness of $(q(x),a_0(x'),\ldots,a_{d-1}(x'))$ if the family $\F$ has the linear nested approximation property then
$$(q(x),a_0(x'),\ldots,a_{d-1}(x'))\in R_n\times R_{n-1}^d.$$
Thus $\F$ satisfies the Weierstrass division Theorem if it satisfies the equivalent properties of Theorem \ref{Thm1}.\\
Let us mention that an admissible family of rings that satisfies the Weierstrass division Theorem is necessarily a family of Henselian local rings (see for instance \cite{D-L}). But it is still unknown if an admissible family of Henselian local rings satisfies the Weierstrass division Theorem (see for instance Remark 5.20 \cite{R}). See also \cite{La} for a partial result in this direction.
\end{rem}

\begin{rem}
The example of Gabrielov \cite{Ga} shows that the family of convergent power series over a characteristic zero valued field does not satisfy the properties of Theorem \ref{Thm1} (but this family satisfies the implicit function Theorem, it even satisfies the Weierstrass division Theorem). This example is the following one:

Let
 $$\phi : \C\{x_1,x_2,x_3\}\lgw\C\{y_1,y_2\}$$
  be the morphism of analytic $\C$-algebras defined by
  $$\phi(x_1)=y_1,\ \phi(x_2)=y_1y_2,\  \phi(x_3)=y_1e^{y_2}.$$
It  is not very difficult to show that $\phi$ and $\wdh\phi$ are both injective (see  \cite{Os}). Then A. Gabrielov remarked that there exists a formal but not convergent power series $\wdh g(x)$ whose image $h(y)$ by $\wdh \phi$ is convergent (see \cite{Ga}). This shows that Corollary \ref{cor} is not satisfied for convergent power series rings. Thus the properties of Theorem \ref{Thm1} are not satisfied in the case of convergent power series rings.

\end{rem}


\section{Proof of Theorem \ref{Thm1}}
We will prove the following implications:
$$(\text i)\Longrightarrow (\text{ii})\Longrightarrow ( \text{iii}) \Longrightarrow (\text iv)\Longrightarrow (\text i)$$

\subsection{Proof of (i) $\Longrightarrow$ (ii)}
In fact we will prove a stronger result that we will also use in the proof of (ii) $\Longrightarrow$ (iii). The proof of (i) $\Longrightarrow$ (ii) follows from the following lemma with $p=0$:
\begin{lem}\label{lem_inter}
Let $(\k\lg x_1,\ldots,x_n\rg)_n$ be an admissible family satisfying  the strong elimination property for ideals and let $M$ be a $\k\lg x,y\rg$-submodule of $\k\lg x,y\rg^{p+t}$.

Then
$M\cap(\{0\}^p\times\k\lg x\rg^t)$
is dense in
$\wdh M\cap (\{0\}^p\times\k\lb x\rb^t).$
\end{lem}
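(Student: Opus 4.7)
The strategy is to reduce the statement to the strong elimination property for ideals by first using faithful flatness to drop the $\{0\}^p$ condition, and then introducing auxiliary variables that turn the module generators into linear forms inside an ideal.

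First I would reduce to the case $p = 0$. Set $A = \k\lg x, y\rg$ and let $\pi : A^{p+t} \to A^p$ be the projection onto the first $p$ coordinates. Put $N = M \cap \ker \pi$, identified with a submodule of $A^t$. Tensoring the exact sequence $0 \to N \to M \to \pi(M) \to 0$ with the flat $A$-algebra $\wdh A$ yields $\wdh A \cdot N = \ker(\wdh M \to \wdh A \cdot \pi(M)) = \wdh M \cap (\{0\}^p \times \wdh A^t)$, so that $(\wdh A \cdot N) \cap \k\lb x\rb^t = \wdh M \cap (\{0\}^p \times \k\lb x\rb^t)$ under the identification. Thus it suffices to show: for any submodule $N \subset \k\lg x, y\rg^t$, the set $N \cap \k\lg x\rg^t$ is dense in $(\wdh A \cdot N) \cap \k\lb x\rb^t$.

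Next I would encode $N$ as an ideal. Pick generators $\omega_1, \ldots, \omega_r$ of $N$, introduce new variables $z = (z_1, \ldots, z_t)$, and work in the admissible ring $\k\lg x, y, z\rg$. For $\omega \in A^t$ set $L_\omega(z) := \sum_{k=1}^t \omega_k z_k$ and form the ideal $I = (L_{\omega_1}, \ldots, L_{\omega_r})$. The $A$-linear assignment $\omega \mapsto L_\omega$ identifies $N$ (respectively $\wdh A \cdot N$) with the linear-in-$z$ part of $I$ (respectively of $\wdh I$). In particular, if $\wdh\omega = \sum_i \wdh v_i \omega_i \in (\wdh A \cdot N) \cap \k\lb x\rb^t$, then $L_{\wdh\omega} = \sum_i \wdh v_i L_{\omega_i}$ lies in $\wdh I \cap \k\lb x, z\rb$. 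Applying the hypothesis, in the form of strong elimination for ideals with kept variables $(x, z)$ and eliminated variables $y$, produces for each $c$ an element $g_c \in I \cap \k\lg x, z\rg$ with $L_{\wdh\omega} - g_c \in (x, z)^{c+1} \k\lb x, z\rb$.

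The final step is to project $g_c$ onto its linear-in-$z$ part. Writing $g_c = \sum_i p_i L_{\omega_i}$ with $p_i \in \k\lg x, y, z\rg$, the degree-one-in-$z$ component is $g_{c,1} = \sum_i p_i(x, y, 0) L_{\omega_i}$, which is still in $I$. Since $g_c$ involves no $y$, this component has the form $L_{\omega_c}$ for some $\omega_c \in \k\lg x\rg^t$, and reading off the $z_k$-coefficients of $L_{\omega_c} = \sum_i p_i(x, y, 0) L_{\omega_i}$ exhibits $\omega_c$ as an $A$-linear combination of the $\omega_i$, hence $\omega_c \in N \cap \k\lg x\rg^t$. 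Comparing the linear-in-$z$ parts of the congruence $L_{\wdh\omega} - g_c \in (x, z)^{c+1}$ gives $\wdh\omega - \omega_c \in (x)^c \k\lb x\rb^t$, which is the required approximation. The main obstacle is precisely this linear-in-$z$ projection: it must simultaneously preserve membership in $I$, which works because $I$ is generated by linear-in-$z$ forms, and it must retain enough $x$-adic precision after absorbing one factor of $z$, which is handled by starting with precision $c+1$ in the mixed $(x,z)$-adic topology.
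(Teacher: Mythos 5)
Your overall strategy is the same as the paper's in spirit (introduce auxiliary variables, encode module elements as linear forms, apply strong elimination for ideals), and the reduction to $p=0$ via flat base change is a clean step that the paper does not make. But the final step has a genuine gap. After applying strong elimination you obtain $g_c\in I\cap\k\lg x,z\rg$, and you extract its degree-one-in-$z$ part $L_{\omega_c}$, asserting $\omega_c\in\k\lg x\rg^t$ and, via $g_c=\sum_i p_iL_{\omega_i}$, that $\omega_c=\sum_i p_i(x,y,0)\,\omega_i\in N$. Both assertions require that the coefficient extraction map (equivalently, evaluation at $z=0$) sends admissible series to admissible series: you need $\partial g_c/\partial z_k(x,0)\in\k\lg x\rg$ and $p_i(x,y,0)\in\k\lg x,y\rg$. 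Definition \ref{adm} does not provide this. Axiom (5), $R_n\cap\k\lb x_1,\ldots,x_m\rb=R_m$, only applies to a series \emph{already known} to lie in $R_n$; it does not say that the Taylor coefficients of an element of $R_n$ with respect to some of the variables are admissible. (This cannot be presupposed here: the remark after Theorem \ref{Thm1} deduces Weierstrass division from the linear nested approximation property, and Lemma \ref{lem_inter} sits upstream of that implication, so one must not assume coefficient extraction preserves admissibility while proving the lemma.) Without that, all you get is $\omega_c\in(\k\lb x,y\rb N)\cap\k\lb x\rb^t$, which is exactly the object you are trying to approximate, so the argument becomes circular.

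The paper's proof is engineered precisely to avoid this: it works in the Nagata idealization $S=\k\lg x,y,z,w\rg/(z,w)^2$, where $\phi:(a,b,c)\mapsto a+\sum b_iz_i+\sum c_jw_j$ is a $\k\lg x,y\rg$-module \emph{isomorphism} onto $S$. An element of the ideal $I=\phi(\{0\}\times M)$ therefore has a unique $M$-preimage whose components are already in $\k\lg x,y\rg$; when that element additionally lies in $\k\lg x,w\rg/(w)^2$, comparing it with its formal $w$-expansion forces those components to lie in $\k\lb x\rb$, and then axiom (5) gives admissibility. No Taylor coefficient of an admissible series is ever taken. You can repair your proof by replacing your ideal $I=(L_{\omega_1},\ldots,L_{\omega_r})\subset\k\lg x,y,z\rg$ with the image of $\{0\}\times N$ under the analogous isomorphism $\k\lg x,y\rg\times\k\lg x,y\rg^t\cong\k\lg x,y,z\rg/(z)^2$, i.e.\ with the $p=0$ idealization, and then run the paper's comparison argument; combined with your reduction to $p=0$, this yields a complete proof.
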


\begin{proof}

Let $S$ be the ring $\k\lg x,y,z,w\rg/(z,w)^2$ where $z=(z_1,\ldots,z_p)$ and $w=(w_1,\ldots,w_t)$ are new variables. Then the morphism of $\k\lg x,y\rg$-modules
$$\phi : \k\lg x,y\rg\times\k\lg x,y\rg^{p+t}\lgw S$$
$$(a,b_1,\ldots,b_p,c_1,\ldots,c_t)\lgm a+\sum_{i=1}^pb_iz_i+\sum_{j=1}^tc_jw_j$$
is a $\k\lg x,y\rg$-isomorphism. We denote by $\wdh \phi$ the isomorphism from $\k\lb x,y\rb^{p+t+1}$ to $\wdh S=\frac{\k\lb x,y,z,w\rb}{(z,w)^2}$ defined in the same way:
$$\wdh \phi(a,b_1,\ldots,b_p,c_1,\ldots,c_t)=a+\sum_{i=1}^pb_iz_i+\sum_{j=1}^tc_jw_j.$$
The image of $\{0\}\times M$ under $\phi$ is an ideal of $S$ denoted by $I$ and the image of $\{0\}\times\wdh M$ under $\wdh \phi$ is $\wdh I$. This is the idealization principle of Nagata.\\
 Moreover the image of $\{0\}\times (M\cap (\{0\}^p\times\k\lg x\rg^t))$ under $\phi$ is the ideal $I\cap \dfrac{\k\lg x,w\rg}{(w)^2}$ and the image of $\{0\}\times(\wdh M\cap (\{0\}^p\times\k\lb x\rb^t))$ is the ideal $\wdh I\cap \dfrac{\k\lb x,w\rb}{(w)^2}$.\\
By the strong elimination property for ideals $I\cap \dfrac{\k\lg x,w\rg}{(w)^2}$ is dense in $\wdh I\cap \dfrac{\k\lb x,w\rb}{(w)^2}$ hence $M\cap(\{0\}^p\times\k\lg x\rg^t)$ is dense in $\wdh M\cap (\{0\}^p\times\k\lb x\rb^t)$.

\end{proof}


\subsection{Proof of (ii) $\Longrightarrow$ (iii)}
We assume that $\F$ has the strong elimination property for modules and we fix a system of linear equations as \eqref{eq}. After a permutation of the $y_i$ we may assume that $\s$ is weakling increasing.\\
We call an \emph{admissible nested solution} (resp. \emph{formal nested solution}) of such a  system \eqref{eq} a solution in
$$ \k\lg  x_1, \ldots,x_{\s(1)}\rg \times \cdots\times\k\lg  x_1, \ldots,x_{\s(m)}\rg $$
$$(\text{resp. } \k\lb x_1, \ldots,x_{\s(1)}\rb\times \cdots\times\k\lb x_1, \ldots,x_{\s(m)}\rb).$$
We will show that the set of admissible nested solutions is dense, for the $\m$-adic topology, in the set of formal nested solutions.\\

$\bullet$ First we claim that we can assume that $b=0$, i.e. the system \eqref{eq} of linear equations is homogeneous. Indeed let us assume that the set of admissible nested solutions of any linear homogeneous system is dense in the set of formal nested solutions and let us fix a linear (non-homogenous) system as \eqref{eq}. Let $y(x)\in\k\lb x\rb^m$ be a formal nested solution of the system \eqref{eq}: $Ty=b$.\\
Let us write $a_{i,j}$ the entries of the $p\times m$ matrix $T$ and denote by $T'$ the matrix

$$T'=\left[\begin{array}{c} -b\, |\, T \end{array}\right]$$
and set $y'=(y_0,y_1, \ldots,y_m)$.

Let us extend  the previous function $\s$ to $\{0, \ldots,m\}$ by $\s(0)=\s(1)$.
Since $y(x)$ is a formal nested solution of \eqref{eq},  $y'(x)=(1,y(x))$ is a formal nested solution of the following linear homogeneous system:
\begin{equation}\label{eq'} T'y'=0\end{equation}
By assumption, for any given  integer $c\geq 1$, there exists an admissible nested solution $y'_c(x)=(y_{0,c}(x),y_{1,c}(x), \ldots,y_{m,c}(x))$ of \eqref{eq'} such that
$$y_{0,c}(x)-1\in (x)^c \text{ and } y_{j,c}(x)-y_j(x)\in (x)^c\ \ \ \forall j\geq 1.$$
In particular $y_{0,c}(0)=1\neq 0$ and $y_{0,c}(x)$ is a unit. Thus
$$(y_{0,c}(x)^{-1}y_{1,c}(x), \ldots,y_{0,c}(x)^{-1}y_{m,c}(x))$$
is an admissible nested solution of \eqref{eq}. Moreover, for all $j\geq 1$, we have:
$$y_{0,c}(x)^{-1}y_{j,c}(x)-y_j(x)=(y_{0,c}(x)^{-1}-1)y_{j,c}(x)+(y_{j,c}(x)-y_j(x))\in (x)^c.$$
Thus the set of admissible nested solutions of \eqref{eq} is dense in the set of formal nested solutions of \eqref{eq} and the claim is proven.
\\

$\bullet$ Let us consider a homogeneous linear system \eqref{eq} where $b=0$. The set of (non-nested) admissible solutions of such a system is a $\k\lg  x\rg $-submodule of $\k\lg  x\rg  ^m$ denoted by $M$. By Noetherianity this module is finitely generated. The set of (non-nested) formal solutions is the completion of $M$ denoted by $\wdh M$ (by flatness of $\k\lg x\rg\lgw \k\lb x\rb$ since $\k\lg x\rg$ is a Noetherian local ring). Thus  the following lemma shows that the nested admissible solutions are dense in the set of nested formal solutions and $\F$ has the linear nested approximation property:\\
\begin{lem}\label{elim_p} Let us assume that $\F$ has the strong elimination property for modules and  $M$ be a finite submodule of $\k\lg  x\rg ^m$. Then
$$M\cap\left(\k\lg  x_1, \ldots, x_{\s(1)}\rg \times \ldots\times \k\lg  x_1, \ldots,x_{\s(m)}\rg \right)$$
is dense in
$$\wdh M\cap\left(\k\lb x_1, \ldots, x_{\s(1)}\rb\times \ldots\times \k\lb x_1, \ldots,x_{\s(m)}\rb\right).$$
\end{lem}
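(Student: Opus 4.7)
My approach will be modeled on Proposition \ref{p1}, with the strong elimination property for modules playing the role that Proposition \ref{p} plays there. After permuting the components of $\k\lg x\rg^m$ to make $\s$ weakly increasing, I will induct on $m$. Because the induction must propagate information about a specific representation of $\wdh u$ in a fixed set of generators of $M$, I will prove the following stronger statement: for any generators $\omega_1,\ldots,\omega_t$ of $M$ and any $\wdh u = \sum_i \wdh v_i \omega_i \in \NN'$ with $\wdh v_i \in \k\lb x\rb$, for every $c \in \N$ there exist $v_{i,c} \in \k\lg x\rg$ with $v_{i,c} - \wdh v_i \in (x)^c \k\lb x\rb$ and $\sum_i v_{i,c}\omega_i \in \NN$.

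The inductive step mirrors Proposition \ref{p1}. I will first reduce to $\s(m) = n$ by replacing $M$ with $M \cap \k\lg x_1,\ldots,x_{\s(m)}\rg^m$; the strong elimination property for modules ensures that neither $\NN$ nor $\NN'$ changes under this substitution (when read in the admissible ring $\k\lg x_1,\ldots,x_{\s(m)}\rg$). Then, projecting onto the first $m-1$ components via $q$ and setting $M_1 = q(M)$ with generators $q(\omega_i)$, the induction hypothesis applied to $q(\wdh u) = \sum_i \wdh v_i\, q(\omega_i)$ will produce the required $v_{i,c}$. The element $u_c := \sum_i v_{i,c}\omega_i \in M$ then has its first $m-1$ components nested by construction, while its last component is automatically in $\k\lg x\rg = \k\lg x_1,\ldots,x_{\s(m)}\rg$.

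The base case $m = 1$ is the heart of the argument. Setting $x' = (x_1,\ldots,x_{\s(1)})$, the strong elimination property for modules together with Remark \ref{rmk_elim} produces some $u_c \in M \cap \k\lg x'\rg$ with $u_c - \wdh u \in (x)^c \k\lb x\rb$. Writing $u_c = \sum_i a_{i,c}\omega_i$ with $a_{i,c} \in \k\lg x\rg$, the error $\wdh u - u_c$ lies in $\wdh M \cap (x)^c\k\lb x\rb$, which by Artin--Rees is contained in $(x)^{c - c_0}\wdh M$ for a constant $c_0$ depending only on the chosen generators; so it admits an expansion $\sum_i s_i\omega_i$ with $s_i \in (x)^{c-c_0}\k\lb x\rb$. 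The tuple $(\wdh v_i - a_{i,c} - s_i)_i$ is then a syzygy of $(\omega_1,\ldots,\omega_t)$ over $\k\lb x\rb$; since $\text{Syz}_{\k\lg x\rg}(\omega)$ is a finite $\k\lg x\rg$-module whose completion equals $\text{Syz}_{\k\lb x\rb}(\omega)$, Lemma \ref{ZS} furnishes an honest $\k\lg x\rg$-syzygy $(\rho_i)$ with $\rho_i \equiv \wdh v_i - a_{i,c} - s_i$ modulo $(x)^{c-c_0}$. Setting $v_{i,c} := a_{i,c} + \rho_i$ then yields $\sum_i v_{i,c}\omega_i = u_c \in \NN$ and $v_{i,c} - \wdh v_i \in (x)^{c - c_0}\k\lb x\rb$; enlarging the prescribed order by $c_0$ gives the result.

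The main obstacle is thus the base case: the strong elimination property alone provides only a bare density statement, whereas the induction requires density \emph{with controlled representatives} $\sum_i v_{i,c}\omega_i$ whose coefficients individually approximate the $\wdh v_i$. Bridging this gap will need both Artin--Rees (to express the error as a small combination of the $\omega_i$) and the density of the finite syzygy module in its completion (to correct the representation of $u_c$ into one of the required form).
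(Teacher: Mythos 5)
Your strategy---carrying a coefficient representation $\wdh u=\sum_i\wdh v_i\omega_i$ through the induction so that the last component is controlled automatically---is correct, but it is genuinely different from the paper's. The paper does not track representations. After applying the induction hypothesis to $M_1=q(M)$ it only obtains $u'_c\in M$ whose first $m-1$ components agree with those of $\wdh u$ up to $(x)^c$, leaving the last component uncontrolled. To fix that it invokes a ``Chevalley's Lemma'' for modules (Lemma~\ref{chevalley}): for a suitable reparametrization $\b$, the error $\wdh u-u'_{\b(c)}$, which lies in $\wdh M\cap\bigl((x)^{\b(c)}\k\lb x\rb^{m-1}\times\k\lb x\rb\bigr)$, is within $(x)^c\k\lb x\rb^m$ of an element of $\wdh M\cap(\{0\}^{m-1}\times\k\lb x\rb)$; the latter is then approximated by an admissible element of $M$ using Lemma~\ref{lem_inter}. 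Your approach avoids Chevalley's Lemma entirely at the cost of a stronger induction hypothesis, and you pay that cost in the base case with standard Artin--Rees plus the density of the admissible syzygy module in its completion. Those two facts are correctly identified and correctly used; this is a legitimate trade, arguably more elementary than the paper's diagonal Artin--Rees-type lemma.

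There is, however, one gap. The reduction to $\s(m)=n$ replaces $M$ by $M'=M\cap\k\lg x_1,\ldots,x_{\s(m)}\rg^m$, which carries a different generating set and hence a different representation of $\wdh u$; this preserves $\NN$, $\NN'$ and the density assertion, but \emph{not} the strengthened assertion with respect to the original $\omega_i$. That strengthened assertion is exactly what the induction must deliver at level $m-1$, and there $\s(m-1)$ can again be strictly smaller than the number of ambient variables, so you cannot restrict the strengthened statement to the case $\s(m)=n$. The repair is the same Artin--Rees plus syzygy correction you already use in the base case, applied once more after the reduction: having produced $u_c\in\NN(M')=\NN(M)$ with $u_c-\wdh u\in(x)^c$, write $u_c=\sum_i a_{ic}\omega_i$ in terms of the original generators, express $u_c-\wdh u$ as $\sum_i s_i\omega_i$ with small $s_i$ via Artin--Rees, and correct $(a_{ic})_i$ by an admissible syzygy close to the formal syzygy $(a_{ic}-\wdh v_i-s_i)_i$. (The ``moreover'' clause of Proposition~\ref{p1} silently passes over the same point.) So your tools suffice, but you need them in one more place than your plan indicates.
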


To prove this lemma we proceed in a similar way as for Theorem \ref{p1}. But before we need to state a  preliminary result since the strong elimination property is apparently weaker than the condition used in Proposition \ref{p}. This  statement is the following lemma which is an analogue of Chevalley's Lemma for summands of modules (classical Chevalley's Lemma concerns decreasing sequences of ideals in complete local rings - see Lemma 7 \cite{Ch}):

\begin{lem}[Chevalley's Lemma]\label{chevalley}
Let $M$ be a $\k\lb x,y\rb$-submodule of $\k\lb x,y\rb^{p+t}$. Then  there exists a  function $\b :\N\lgw \N$ such that
$$M\cap ((x)^{\b(c)}\k\lb x\rb^p\times\k\lb x\rb^t))\subset M\cap(\{0\}^p\times\k\lb x\rb^t)+(x)^{c}\k\lb x\rb^{p+t} \ \ \forall c\in\N.$$
\end{lem}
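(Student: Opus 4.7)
The plan is to reduce the statement, which on its face concerns a $\k\lb x,y\rb$-submodule, to a purely $\k\lb x\rb$-module question to which the classical Chevalley Lemma (Lemma 7 of \cite{Ch}) applies directly.

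First I would introduce $M' := M \cap \k\lb x\rb^{p+t}$. Since $\k\lb x\rb^{p+t}$ is a finitely generated $\k\lb x\rb$-module and $\k\lb x\rb$ is Noetherian, $M'$ is a finitely generated $\k\lb x\rb$-submodule. The point of replacing $M$ by $M'$ is that both occurrences of $M$ in the desired inclusion only see elements with no $y$-dependence: the sets $(x)^n\k\lb x\rb^p\times\k\lb x\rb^t$ and $\{0\}^p\times\k\lb x\rb^t$ lie entirely inside $\k\lb x\rb^{p+t}$, so intersecting with $M$ is the same as intersecting with $M'$. Setting $K := M'\cap(\{0\}^p\times\k\lb x\rb^t)$ and $L_n := M'\cap\bigl((x)^n\k\lb x\rb^p\times\k\lb x\rb^t\bigr)$, the goal becomes the existence of a function $\b$ with $L_{\b(c)}\subset K+(x)^c\k\lb x\rb^{p+t}$.

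Next, I would observe that $(L_n)$ is a decreasing sequence of submodules of $M'$ with $\bigcap_n L_n = K$, which follows immediately from Krull's intersection theorem $\bigcap_n (x)^n\k\lb x\rb^p = 0$ in $\k\lb x\rb$. Passing to the quotient, the images $L_n/K$ form a decreasing family of submodules of $M'/K$ with zero intersection, inside a finitely generated module over the complete Noetherian local ring $(\k\lb x\rb,(x))$. Applying the module version of Chevalley's Lemma (the argument in Lemma 7 of \cite{Ch} extends verbatim from ideals to submodules of finitely generated modules over a complete Noetherian local ring) then produces, for every $c$, an integer $\b(c)$ with $L_{\b(c)}/K \subset (x)^c(M'/K)$. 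Lifting back to $M'$ gives $L_{\b(c)}\subset K + (x)^c M' \subset K + (x)^c\k\lb x\rb^{p+t}$, which is exactly the inclusion to be proved.

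The argument is essentially formal once the set-up is in place, so I do not expect a genuine obstacle. The step that deserves the most care is the initial reduction, namely verifying that intersecting with $M$ and with $M'$ yield the same submodules on both sides of the inclusion, so that nothing is lost when discarding the ``$y$-dependent'' part of $M$. Once this bookkeeping is checked, the classical completeness-plus-Artinian-quotient argument behind Chevalley's Lemma does all the real work.
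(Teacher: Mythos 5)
Your proposal is correct and follows essentially the same route as the paper: reduce to the finitely generated $\k\lb x\rb$-submodule $M' = M\cap\k\lb x\rb^{p+t}$ (the paper does this mid-proof), then exploit completeness of $\k\lb x\rb$, Artinian quotients, and Krull's intersection theorem. The only difference is one of packaging: you quotient by $K$ and invoke the module form of Chevalley's Lemma as a black box, whereas the paper reproduces the underlying Cauchy-sequence/Nakayama argument inline (indeed the authors flag the statement as an ``analogue of Chevalley's Lemma for summands of modules'' rather than a direct corollary, so a careful write-up should either supply that argument, as they do, or give a precise reference for the module-theoretic version).
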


\begin{proof}
For simplicity let us set $N:=M\cap(\{0\}^p\times\k\lb x\rb^t)$. Let us assume that there is an integer $c_0\in\N$ such that
$$M\cap ((x)^{\b}\k\lb x\rb^p\times\k\lb x\rb^t))\not\subset N+ (x)^{c_0}\k\lb x\rb^{p+t} \ \ \forall \b\in\N.$$
 So we have that
$$M\cap ((x)^{\b}\k\lb x\rb^p\times\k\lb x\rb^t)\not\subset N+(x)^{c}\k\lb x\rb^{p+t} \ \ \forall \b\in\N, \ \forall c\geq c_0.$$
By replacing $M$ by $M\cap\k\lb x\rb^{p+t}$ we may assume that $M$ is a submodule of $\k\lb x\rb^{p+t}$.
The module $M/(M\cap (x)^c\k\lb x\rb ^{p+t})$ is an Artinian module thus there is an integer $a(c)$ such that for all $\b\geq a(c)$:

$$M\cap ((x)^{a(c)}\k\lb x\rb^p\times\k\lb x\rb^t)+M\cap (x)^c\k\lb x\rb ^{p+t}=M\cap ((x)^{\b}\k\lb x\rb^p\times\k\lb x\rb^t))+M\cap (x)^c\k\lb x\rb ^{p+t}.$$
We may assume that $a(c)<a(c+1)$ for every $c$. Since
$$M\cap ((x)^{a(c)}\k\lb x\rb^p\times\k\lb x\rb^t)\subset M\cap ((x)^{a(c)}\k\lb x\rb^p\times\k\lb x\rb^t)+M\cap (x)^c\k\lb x\rb ^{p+t}$$
$$=M\cap ((x)^{a(c+1)}\k\lb x\rb^p\times\k\lb x\rb^t)+M\cap (x)^c\k\lb x\rb ^{p+t}$$
for a given $u_c\in M\cap ((x)^{a(c)}\k\lb x\rb^p\times\k\lb x\rb^t)$ there exists an element
$$u_{c+1}\in M\cap ((x)^{a(c+1)}\k\lb x\rb^p\times\k\lb x\rb^t)$$
 such that
$$u_c-u_{c+1}\in M\cap (x)^c\k\lb x\rb ^{p+t}.$$
Thus by choosing $u_{c_0}\in M\cap ((x)^{a(c_0)}\k\lb x\rb^p\times\k\lb x\rb^t)\backslash \left(N+M\cap (x)^{c_0}\right)$ we may construct a sequence $(u_c)_c$ as above. Hence this sequence is a Cauchy sequence and has a limit $u\in M$ since $M$ is a complete module. But $u_{c'}\in M\cap ((x)^{a(c)}\k\lb x\rb^p\times\k\lb x\rb^t)$ for every $c'\geq c$ and   $M\cap ((x)^{a(c)}\k\lb x\rb^p\times\k\lb x\rb^t)$ is a complete module thus
$$u\in \bigcap_{c\geq c_0} M\cap ((x)^{a(c)}\k\lb x\rb^p\times\k\lb x\rb^t)=M\cap(\{0\}^p\times\k\lb x\rb^t)=N$$
by Nakayama's Lemma.\\
On the other hand we have
$$u-u_{c_0}\in M\cap (x)^{c_0}\k\lb x\rb ^{p+t}$$
so $u_{c_0}\in N+M\cap (x)^{c_0}\k\lb x\rb ^{p+t}$ which contradicts the assumption on $u_{c_0}$.
\end{proof}

\begin{proof}[Proof of Lemma \ref{elim_p}]
We prove the lemma by induction on $m$, the case $m=1$ being equivalent to the strong elimination property for modules.
  Assume that $m>1$. By the strong elimination property for modules we may reduce to the case when $\s(m)=n$ by replacing $M$ by $M\cap \k\lg x_1,\ldots,x_{\s(m)}\rg^m$ if $\s(m)<n$. Let
$$q:\k\lb x\rb^m\to \k\lb x\rb^{m-1}$$
be the projection on the first $m-1$ components. Let
$${\wdh u}\in \wdh M\cap\left(\k\lb x_1, \ldots, x_{\s(1)}\rb\times \ldots\times \k\lb x_1, \ldots,x_{\s(m)}\rb\right)$$ and set $M_1=q(M)$.
By induction hypothesis applied to $M_1$ and $q(\wdh u)$, for every $c\in \N$ there exists   $u'_c\in M\cap\left(\k\lg  x_1, \ldots, x_{\s(1)}\rg \times \ldots\times \k\lg  x_1, \ldots,x_{\s(m)}\rg \right)$  such that  $q({\wdh u})-q(u'_c)\in (x)^c$.\\
Now $\wdh u-u'_c\in\left( (x)^c\k\lb x\rb^{m-1}\times\k\lb x\rb\right)\cap \wdh M$. Thus by Lemmas \ref{chevalley} and \ref{lem_inter} there exists a function $\b$ such that
$$\wdh u-u'_{\b(c)}\in (x)^c\k\lb x\rb^m\cap \wdh M+\overline{(\{0\}^{m-1}\times\k\lg x\rg)\cap M}$$
where $\overline{(\{0\}^{m-1}\times\k\lg x\rg)\cap M}$ denotes the closure of $(\{0\}^{m-1}\times\k\lg x\rg)\cap M$.
 Thus there exists $u''_c\in(\{0\}^{m-1}\times\k\lg x\rg)\cap M$ such that
 $$\wdh u-(u'_{\b(c)}+u''_c)\in (x)^c\k\lb x\rb^m$$
 and $$u'_{\b(c)}+u''_c\in M\cap\left(\k\lg  x_1, \ldots, x_{\s(1)}\rg \times \ldots\times \k\lg  x_1, \ldots,x_{\s(m)}\rg \right)$$
 since $\s(m)=n$.
\end{proof}

\subsection{Proof of (iii) $\Longrightarrow$ (iv)}

Let $$\phi :\frac{\k\lg x\rg}{I}\lgw \frac{\k\lg y\rg}{J}$$
be an injective morphism of local rings and let $\wdh f\in\Ker(\wdh \phi)$. The morphism $\phi$ is defined by  admissible power series $\phi_1(y),\ldots,\phi_n(y)$ such that
$$g(\phi_1(y), \ldots,\phi_n(y))\in J\ \ \ \forall g\in I$$
and,  for any power series $g$, the image of $g$ modulo $I$ is equal to
$$g(\phi_1(y), \ldots,\phi_n(y))\text{ modulo } J.$$
We still denote by $\wdh f$ a lifting of $\wdh f$ in $\k\lb x\rb$. Thus
$$\wdh f(\phi_1(y), \ldots,\phi_n(y))\in \wdh  J,$$
i.e. there exist formal power series $\wdh h_1(y),\ldots,\wdh h_s(y)$ such that
$$\wdh f(\phi_1(y), \ldots,\phi_n(y))=\sum_{j=1}^sq_j(y)\wdh h_j(y)$$
where the $q_j(y)$ are generators of the ideal  $J$. By Taylor's formula there exist formal power series $\wdh k_i(x,y)$ such that
\begin{equation}\label{nested_eq}\wdh f(x)-\sum_{j=1}^sq_j(y)\wdh h_j(y)=\sum_{i=1}^n(x_i-\phi_i(y))\wdh k_i(x,y).\end{equation}

By the linear nested approximation property, for any integer $c$, there exists a vector of  admissible power series
$$(f_c(x), h_{1,c}(x,y), \ldots,h_{s,c}(x,y),k_{1,c}(x,y), \ldots, k_{n,c}(x,y))$$
such that
$$f_c(x)-\sum_{j=1}^s q_j(y) h_{j,c}(x,y)=\sum_{i=1}^n(x_i-\phi_i(y)) k_{i,c}(x,y)$$
and
$$f_c(x)-\wdh f(x)\in (x)^c, \ h_{j,c}(x,y)-\wdh h_{j}(y)\in (x,y)^c,\ k_{i,c}(x,y)- \wdh k_{i}(x,y)\in (x,y)^c$$
for all $j$ and $i$.
By replacing $x_i$ by $\phi_i(y)$ for $i=1,\ldots,n$, we see that $\phi(f_c(x))=0$, thus $f_c(x)=0$ since $\phi$ is injective. Thus $\wdh f(x)\in (x)^c$ for all $c\geq 0$ thus $\wdh f(x)=0$ by Nakayama's Lemma. This shows that  $\phi$ is strongly injective.

\subsection{Proof of (iv) $\Longrightarrow$ (i)}
Let $I$ be an ideal of $\k\lg  x,y\rg $. Let $\phi$ be the following injective morphism induced by the inclusion $\k\lg x\rg\lgw \k\lg x,y\rg$:
$$\frac{\k\lg  x\rg }{I\cap\k\lg  x\rg }\lgw \frac{\k\lg  x,y\rg }{I}.$$
Then $(I\cap\k\lg  x\rg )\k\lb x\rb=\wdh I\cap\k\lb x\rb$ if and only if $\phi$ is strongly injective since
$$\Ker(\wdh \phi)=\frac{\wdh I\cap \k\lb x\rb}{(I\cap\k\lg x\rg)\k\lb x\rb}.$$





\begin{thebibliography}{00}

  \bibitem[AvdP70]{A-vdP} S. S. Abhyankar, M. van der Put, Homomorphisms of analytic local rings, \textit{J. Reine Angew. Math.}, \textbf{242}, (1970), 26-60.


  \bibitem[An75]{An} M. Andr\'e,   Artin  theorem on the solution of analytic equations in positive characteristic,  \textit{Manuscripta Math.}, \textbf{15},  (1975),  314-348.

  \bibitem[Ar68]{Ar68} M. Artin, On the solutions of analytic equations, \textit{Invent. Math.}, \textbf{5}, (1968), 277-291.

\bibitem[Ar69]{Ar69} M. Artin, Algebraic approximation of structures over complete local rings, \textit{Publ. Math. IHES}, \textbf{36}, (1969), 23-58.

\bibitem[Ar71]{Ar71} M. Artin, Algebraic spaces, Yale Mathematical Monographs, 3. Yale University Press, New Haven, Conn.-London, 1971.

\bibitem[Be77]{Be} J. Becker, Expos\'e on a conjecture of Tougeron, \textit{Ann. Inst. Fourier}, \textbf{27}, no. 4, (1977), 9-27.

\bibitem[BM87]{BM} E. Bierstone, P. Milman, Relations among analytic functions II,  \textit{Ann. Inst. Fourier},  \textbf{37},  (1987),  no.2, 49-77.

  \bibitem[Ch43]{Ch} C. Chevalley, On the theory of local rings, \textit{Ann. of Math.}, \textbf{44}, (1943), 690-708.
  
  \bibitem[DL80]{D-L} J. Denef, L. Lipshitz, Ultraproducts and Approximation in Local Rings II, \textit{Math. Ann.}, $\bf{253}$, (1980), 1-28.


\bibitem[EH77]{E-H} P. M. Eakin, G. A. Harris, When $\Phi(f)$ convergent implies $f$ convergent, \textit{Math. Ann.}, \textbf{229}, (1977), 201-210.

\bibitem[FB12]{FB} J. Fern\'andez de Bobadilla, Nash Problem for surface singularities is a topological problem, \textit{Adv. Math.}, \textbf{230}, (2012), 131-176.

 \bibitem[Ga71]{Ga} A. M. Gabrielov, The formal relations between analytic functions, \textit{Funkcional. Anal. i Prilovzen}, \textbf{5}, (1971), 64-65.

 \bibitem[Ga73]{Ga2} A. M. Gabrielov, Formal relations among analytic functions, \textit{Izv. Akad. Naut. SSSR}, \textbf{37}, (1973), 1056-1088.

\bibitem[Gr60]{Gro} A. Grothendieck, Techniques de construction en g\'eom\'etrie analytique VI, \textit{S\'eminaire Henri Cartan}, \textbf{13}, no. 1, (1960-61).

\bibitem[Iz89]{Iz} S. Izumi, The rank condition and convergence of formal functions, \textit{Duke Math. J.}, \textbf{59}, (1989), 241-264.

\bibitem[La67]{La}  J.-P. Lafon, Anneaux hens\'eliens et th\'eor\`eme de pr\'eparation, \textit{C. R. Acad. Sci. Paris S\'er. A-B}, \textbf{264}, (1967), A1161-A1162.

\bibitem[Mil78]{Mi}  P. Milman, Analytic and polynomial homomorphisms of analytic rings, \textit{Math. Ann.}, \textbf{232},  (1978), no. 3, 247-253.

\bibitem[Mir12]{Mir} N. Mir, Algebraic approximation in CR geometry, \textit{J. Math. Pures Appl. (9)},  \textbf{98}, No. 1, (2012),  72-88.

\bibitem[Na62]{Na} M. Nagata, Local rings, Interscience Tracts in Pure and Applied Mathematics, \textbf{13}, New York, (1962).

\bibitem[Os16]{Os} W. F. Osgood, On functions of several complex variables, \textit{Trans. Amer. Math. Soc.}, \textbf{17}, (1916), 1-8.

\bibitem[Po86]{Po} D. Popescu, General N\'eron desingularization and approximation, \textit{Nagoya Math. J.}, \textbf{104}, (1986), 85-115.

\bibitem[Ro08]{Ro} G. Rond, Approximation de Artin cylindrique et morphismes d'alg\`ebres analytiques, \textit{Proceedings of the L\^efest - Singularities I: Algebraic and Analytic Aspects}, Contemporary Mathematics, \textbf{474}, (2008), 299-307.

\bibitem[Ro09]{Ro1} G. Rond, Homomorphisms of local algebras in positive characteristic, \textit{J. Algebra}, \textbf{322}, no. 12, (2009), 4382-4407.


\bibitem[Ro15]{R} G. Rond, Artin Approximation, 	arXiv:1506.04717.

\bibitem[SZ58]{SZ} O. Zariski, P. Samuel, Commutative Algebra II, D. Van Nostrand Company, Inc., Princeton, New Jersey, (1960).

\bibitem[Sh10]{Sh} M. Shiota, Analytic and Nash equivalence relations of Nash maps, \textit{Bull. Lond. Math. Soc.}, \textbf{42}, No. 6, (2010), 1055-1064.

\bibitem[Sw98]{Sw} R. Swan,  N\'eron-Popescu desingularization,  Algebra and geometry (Taipei, 1995),  135-192, Lect. Algebra Geom., 2, Internat. Press, Cambridge, MA, 1998.

\bibitem[To76]{To} J.-Cl. Tougeron, Courbes analytiques sur un germe d'espace analytique et applications, \textit{Ann. Inst.  Fourier}, \textbf{26}, no. 2, (1976), 117-131.

\bibitem[To90]{To2} J.-Cl. Tougeron, Sur les racines d'un polyn\^ome \`a coefficients s\'eries formelles, \textit{Real analytic and algebraic geometry (Trento 1988)}, 325-363, \textit{Lectures Notes in Math.}, \textbf{1420}, (1990).




    \end{thebibliography}
\end{document}